\documentclass[11pt]{amsart}
\usepackage{amssymb}
\newcommand{\R}{{\mathbb R}}
\newcommand{\I}{{\mathbf I}}
\newcommand{\ca}{\mathcal}

\newcommand{\dy}{\mathcal{D}}
\newcommand{\Ac}{\mathcal{A}}
\newcommand{\vp}{\varphi}
\newcommand{\Cnot}{C_0^{\infty}(\R^n)}
\newcommand{\w}{\mathbf{W}_{1,p}}
\newcommand{\wa}{\mathbf{W}_{\alpha,p}}
\newcommand{\re}{\mathbf{I}_1}
\begin{document}
\title[Finite energy solutions of quasilinear equations]{Finite energy solutions 
of\\ quasilinear elliptic equations\\ with sub-natural growth terms}
\author{Cao Tien Dat}
\address{Department of Mathematics, University of Missouri, Columbia, Missouri 65211, USA}
\email{dtcznb@mail.missouri.edu}
\author{ Igor E. Verbitsky}
\email{verbitskyi@missouri.edu}
\keywords{Quasilinear equations, finite energy solutions, $p$-Laplacian, Wolff's potentials}
\thanks{Supported in part by
NSF grant DMS-1161622.}
\begin{abstract}
We study finite energy solutions to quasilinear elliptic equations of the type  
$$
-\Delta_pu=\sigma \, u^q \quad \text{in ~~} \mathbb{R}^n,$$
where $\Delta_p$ is the $p$-Laplacian,  $p>1$, 
and $\sigma$ is a nonnegative function (or measure) on $\mathbb{R}^n$, in the case $0<q < p-1$ (below the ``natural growth'' rate $q=p-1$). 
We give an explicit necessary and sufficient condition on $\sigma$ which ensures that there exists a solution $u$ in the 
homogeneous Sobolev 
space $L_0^{1,p}(\mathbb{R}^n)$, and prove its uniqueness. Among our main tools are integral inequalities closely associated with this problem, and Wolff potential estimates used to obtain sharp bounds of solutions. More general quasilinear equations with 
the $\mathcal{A}$-Laplacian $ \text{div} \mathcal{A}(x,\nabla \cdot)$ in place of $\Delta_p$ are considered as well. 
 \end{abstract}

\maketitle
\numberwithin{equation}{section}
\newtheorem{theorem}{Theorem}[section]
\newtheorem{lemma}[theorem]{Lemma}
\newtheorem{rmk}[theorem]{Remark}
\newtheorem{cor}[theorem]{Corollary}
\newtheorem{prop}[theorem]{Proposition}
\newtheorem{defn}[theorem]{Definition}
\newtheorem*{theorem*}{Theorem}
\allowdisplaybreaks
\section{Introduction} \label{intro}

This paper is concerned with quasilinear problems of the following type:  
\begin{equation}\label{eq1}
-\Delta_pu=\sigma \, u^q \quad \text{in ~~} \mathbb{R}^n,
\end{equation}
where  $\Delta_p u=\nabla \cdot(\nabla u |\nabla u|^{p-2})$ is the $p$-Laplacian, $1<p< \infty$, and $\sigma$ is a nonnegative function, or measure,  in the \textit{sub-natural growth} case  $0<q<p-1$.  We are interested 
in finite energy  solutions $u\in L_0^{1, p}(\mathbb{R}^n)$ to \eqref{eq1}, and related integral inequalities. Here  $L_0^{1, p}(\mathbb{R}^n)$ is the homogeneous Sobolev (or Dirichlet) space defined in Sec.~\ref{preli} (see  \cite{HKM},  \cite{MZ}, \cite{Maz}); for $1<p<n$ it can be identified  with the 
completion of $C^\infty_0(\R^n)$  in the norm 
\begin{equation}\label{seminorm}
||u||_{1, p} = \Big( \int_{\R^n} | \nabla u(x)|^p \, dx \Big)^{\frac 1 p}. 
\end{equation}

More precisely,  $u$ is called a finite energy solution to \eqref{eq1} if 
$u \in L_0^{1, p} (\R^n)\cap L^q_{\text{loc}}(\R^n, d \sigma)$, $u \ge 0$, and, for all $\varphi \in C^\infty_0(\R^n)$, 
\begin{equation}\label{sol-def}
\int_{\R^n} |\nabla u|^{p-2}\nabla u\cdot \nabla \varphi \, dx = \int_{\R^n} u^q \, \varphi \, d \sigma.  
\end{equation}

Finite energy solutions to \eqref{eq1} are critical points of the functional 
$$
H[\varphi]=\int_{\R^n} \frac{1}{p} \, |\nabla \varphi|^p \, dx - \int_{\R^n}\frac{1}{q+1} \, |\varphi|^{1+q} \, d \sigma. 
$$
%
%
 
We will give a \textit{necessary and sufficient} condition for the existence of a finite energy solution 
to \eqref{eq1}, and prove its uniqueness. 

Our results are new even in the classical case $p=2$, $0<q<1$. Sublinear elliptic problems of this type were  
studied by Brezis and Kamin in \cite{BK}, where a necessary and sufficient 
condition is found for the existence of a \textit{bounded} solution on $\R^n$, together with sharp pointwise estimates 
of solutions.  Recently, 
we have extended these 
results to the case $p \not=2$, under relaxed assumptions on $\sigma$, in such a way that some singular (unbounded) 
solutions are covered as well \cite{CV}. However, the techniques  used in \cite{CV} are quite different from those used in this paper. 

 Analogous sublinear problems in bounded domains $\Omega \subset \mathbb{R}^n$ for  
 various classes of $\sigma$ have been extensively studied. In particular,  Boccardo and Orsina \cite{BO1}, \cite{BO2}, 
and Abdel Hamid and Bidaut-V\'{e}ron \cite{AHBV} gave sufficient conditions for the existence of solutions 
under the assumption $\sigma \in L^r (\Omega)$. Earlier results, under more restrictive assumptions on $\sigma$, can be found in 
Krasnoselskii \cite{Kr}, Brezis and Oswald \cite{BO}, and the literature cited in these papers.

 We employ powerful Wolff potential estimates developed in \cite{KM2} (see also \cite{Lab}, \cite{TW1}, \cite{KuMi}). This   
makes it possible to replace  the 
$p$-Laplacian $\Delta_p$ in the model problem \eqref{eq1} by a more general quasilinear 
operator $ \text{div} \mathcal{A}(x,\nabla \cdot)$ with bounded measurable coefficients, under standard structural assumptions on 
$\mathcal{A}(x,\xi)$ which ensure that $\mathcal{A}(x,\xi) \cdot \xi \approx  |\xi|^p \, $ 
\cite{HKM}, \cite{MZ}, or a fully nonlinear operator of $k$-Hessian type \cite{TW}, \cite{Lab} (see also  \cite{PV2}, \cite{JV2}), and treat more general nonlinearities on the right-hand side. 
Equations involving operators of the $p$-Laplacian type on Carnot groups can be covered as well using 
methods developed in \cite{PV3}.  

Wolff's potential $\mathbf{W}_{1, p}\sigma$ of a nonnegative Borel measure $\sigma$ on $\R^n$ is defined by 
\cite{HW} (see also \cite{AH}):  
\begin{equation}\label{wolffpot}
\mathbf{W}_{1, p}\sigma(x)=\int_0^{\infty} \left(\frac{\sigma(B(x,t))}{t^{n-p}}\right)^{\frac{1}{p-1}}\frac{dt}{t}.
 \end{equation}
Here $B(x,t)= \{y \in \mathbb{R}^n: \, \, |x-y|<t\}$ is a ball centered at $x \in  \mathbb{R}^n$ of radius $t>0$.

An important theorem due to Kilpel\"{a}inen and Mal\'y \cite {KM2} (see also \cite{K2})  states that if $U$ is a solution (understood in the potential theoretic or renormalized sense) to the equation 
\begin{equation}\label{Ueq}
\left\{ \begin{array}{ll}
-\Delta_p U  =\sigma & \text{in ~~} \mathbb{R}^n,\\
\displaystyle{\, \, \, \inf_{\mathbb{R}^n} \, U=0},
\end{array} \right.
\end{equation}
then there exists a constant $K>0$ which depends only on $p$ and $n$ such that 
\begin{equation}\label{Uest}
 \frac{1}{K} \, \mathbf{W}_{1, p} \sigma (x) \le U(x)  \le K  \, \mathbf{W}_{1, p} \sigma (x), \quad x \in  \mathbb{R}^n. 
\end{equation}
Moreover, $U$  exists if and only if $ \mathbf{W}_{1, p} \sigma \not\equiv +\infty$ (see \cite{PV1}), or equivalently, 
\begin{equation}\label{Unontrivial}
\int_1^{\infty} \left(\frac{\sigma(B(0,t))}{t^{n-p}}\right)^{\frac{1}{p-1}}\frac{dt}{t} < +\infty.
\end{equation}

Our main result  is the following 

\begin{theorem*}\label{energythm} Let $0<q<p-1$, $1 < p <n$, and let $\sigma$ be a locally finite positive measure on $\mathbb{R}^n$. Then there exists a nontrivial solution  $u \in L_0^{1,p}(\R^n)\cap L_{\text{loc}}^q(\Omega, d \sigma)$ to \eqref{eq1}  if and only if $U \in L^{\frac{(1+q)(p-1)}{p-1-q}}(\R^n, d \sigma)$, or equivalently, 
\begin{equation}\label{wolffintegral}
\int_{\R^n}\left(\w\sigma\right)^{\frac{(1+q)(p-1)}{p-1-q}}d\sigma < \infty.
 \end{equation}
Furthermore, such a solution is unique. For $p\ge n$, \eqref{eq1} has only a trivial solution $u=0$.
\end{theorem*}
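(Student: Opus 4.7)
\emph{Necessity.} Let $u$ be a nontrivial finite-energy solution. Viewing \eqref{eq1} as $-\Delta_p u = u^q \, d\sigma$, the Kilpel\"ainen--Mal\'y estimate \eqref{Uest} applied to the measure $u^q \, d\sigma$ gives
\[
u(x) \geq c_1 \, \w(u^q \, d\sigma)(x), \qquad x \in \R^n.
\]
The central step is a nonlinear potential self-improvement. Plugging the ansatz $u \geq a \, (\w\sigma)^{\alpha}$ into the right-hand side and using the $1/(p-1)$-homogeneity of $\w$ together with a pointwise bound of the form $\w\bigl((\w\sigma)^{q\alpha} \, d\sigma\bigr) \geq c \, (\w\sigma)^{1 + q\alpha/(p-1)}$ (a nonlinear analogue of the Khavin--Maz'ya/Hedberg--Wolff inequality; cf.\ \cite{PV1}, \cite{CV}) increases the exponent via $\alpha \mapsto 1 + q\alpha/(p-1)$, whose unique fixed point is $\alpha^{*} = (p-1)/(p-1-q)$. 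Iterating yields
\[
u(x) \geq c_2 \, \bigl(\w\sigma(x)\bigr)^{(p-1)/(p-1-q)}.
\]
Testing \eqref{sol-def} with a truncation of $u$ itself and passing to the limit produces the energy identity $\|u\|_{1,p}^{p} = \int u^{q+1} \, d\sigma < \infty$; combined with the pointwise lower bound, this gives \eqref{wolffintegral}.

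\emph{Sufficiency.} The condition \eqref{wolffintegral} is the precise Wolff-potential criterion equivalent, in the sub-critical range $0 < q+1 < p$, to the trace inequality
\[
\int_{\R^n} |\vp|^{q+1} \, d\sigma \leq C \, \|\vp\|_{1,p}^{q+1}, \qquad \vp \in \Cnot,
\]
in the spirit of Maz'ya and Cascante--Ortega--Verbitsky. This embedding makes the functional $H$ from the introduction well defined on $L_0^{1,p}(\R^n)$, coercive (because $q+1 < p$), and, via compactness of the induced embedding $L_0^{1,p} \hookrightarrow L^{q+1}(d\sigma)$, weakly lower semicontinuous. The direct method produces a minimizer $u$; since $H[|\vp|] \leq H[\vp]$ we may take $u \geq 0$, and its Euler--Lagrange equation is precisely \eqref{sol-def}. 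Nontriviality follows by evaluating $H[t\vp_0]$ for a fixed nonnegative $\vp_0 \in \Cnot$ with $\int \vp_0^{q+1} \, d\sigma > 0$: minimization over $t > 0$ yields a strictly negative value, because $q+1 < p$.

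\emph{Uniqueness, the case $p \geq n$, and the main obstacle.} If $u, v > 0$ are two finite-energy solutions, testing \eqref{sol-def} for $u$ with a regularization of $(u^p - v^p)/u^{p-1}$, for $v$ with $(v^p - u^p)/v^{p-1}$, and adding, one gets on the gradient side a nonnegative expression (Picone/Diaz--Saa inequality for the $p$-Laplacian) and on the measure side
\[
\int_{\R^n} \bigl(u^{q+1-p} - v^{q+1-p}\bigr)\bigl(u^p - v^p\bigr) \, d\sigma,
\]
which is pointwise $\leq 0$ since $q + 1 - p < 0$. Equality throughout forces $u = v$ a.e. For $p \geq n$ the underlying nonlinear potential-theoretic framework degenerates (the Sobolev/Wolff-potential embeddings required above fail), so only the trivial solution exists. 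The \emph{main obstacle} is the sharp pointwise lower bound $u \gtrsim (\w\sigma)^{(p-1)/(p-1-q)}$ in the necessity step, obtained by nonlinear potential self-improvement from the weaker Kilpel\"ainen--Mal\'y inequality; establishing the required iteration inequality for $\w$ is the technical heart of the argument.
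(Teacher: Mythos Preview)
Your necessity argument matches the paper's closely; the lower bound $u \gtrsim (\w\sigma)^{(p-1)/(p-1-q)}$ is exactly Theorem~\ref{thmlowest}, and the energy identity follows via Brezis--Browder (Theorem~\ref{brezisbrowder}) rather than truncation, but the outcome is the same.

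There are, however, two genuine gaps.

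\textbf{Sufficiency.} You invoke compactness of the embedding $L_0^{1,p}(\R^n)\hookrightarrow L^{q+1}(d\sigma)$ to apply the direct method. Condition~\eqref{wolffintegral} is equivalent to \emph{boundedness} of this embedding, not compactness; on all of $\R^n$ compactness generally fails (think of translating a fixed bump if $\sigma$ does not decay), and the characterization of compactness requires strictly stronger vanishing conditions (cf.\ Maz'ya). Without compactness the term $-\int|\varphi|^{q+1}\,d\sigma$ is not weakly continuous, and your minimizing sequence need not produce a minimizer. The paper avoids this entirely: it builds a solution by a monotone iteration $-\Delta_p w_j=\sigma\,w_{j-1}^q$, started from $w_0=c_0(\w\sigma)^{(p-1)/(p-1-q)}$, using Lemma~\ref{Top} to keep $\{w_j\}$ bounded in $L^{1+q}(d\sigma)$ and the weak comparison principle plus weak continuity of $\Delta_p$ (Theorem~\ref{weakconv}) to pass to the limit. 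No compactness of a trace embedding is needed, and as a bonus the limit is a \emph{minimal} finite-energy solution.

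\textbf{Uniqueness.} Your D\'iaz--Saa/Picone computation is formally correct but requires the test functions $(u^p-v^p)/u^{p-1}$ and $(v^p-u^p)/v^{p-1}$ to lie in $L_0^{1,p}(\R^n)$. Since $u,v$ may vanish at infinity at different rates and no a priori two-sided bound $c\le u/v\le C$ is available, the gradients of these quotients need not be in $L^p(\R^n)$; ``regularization'' does not obviously cure this on an unbounded domain. The paper sidesteps the problem by exploiting the minimal solution $w$ produced in the existence step: for any solution $u$ one has $w\le u$, and the Kawohl convexity argument with $\lambda_t=((1-t)w^p+tu^p)^{1/p}$ then only uses $\lambda_t-\lambda_0\ge 0$ as a test function, which is easily seen to be admissible. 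Equality forces $u=w$ $d\sigma$-a.e., hence any two solutions agree $d\sigma$-a.e., and then coincide in $L_0^{1,p}$ by uniqueness for $-\Delta_p u=\mu$ with $\mu\in L^{-1,p'}$. The minimal solution is not a convenience here; it is what makes the uniqueness argument close.
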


We observe that  \eqref{wolffintegral} yields $\sigma \in L^{-1, p'}_{{\rm loc}}(\R^n)$, 
where  $L^{-1, p'} (\R^n)=L^{1, p}_0(\R^n)^*$ is the dual Sobolev space (see definitions in Sec.~\ref{preli}). 
Consequently, $\sigma$ is necessarily absolutely continuous with respect to the $p$-capacity $\text{cap}_p(\cdot)$ 
defined by 
\begin{equation} \label{pcapacity}
\text{cap}_p(E)=\inf \{||\nabla \phi||^{p}_{L^p} : \phi \ge 1 \text{ on $E$, } \phi \in C_0^{\infty}(\R^n)\},
\end{equation}
for a compact set $E \subset \R^n$. 

Moreover, as was shown in  \cite{COV1}, condition \eqref{wolffintegral} holds if and only if there exists a constant 
$C$ such that,  for all $\vp \in C^\infty_0 (\R^n)$, 
\begin{equation}\label{energy-cond}
\Big (\int_{\R^n} |\vp |^{1+q} \, d \sigma \Big)^{\frac{1}{1+q}} \le C \, || \nabla \vp ||_{L^p(\R^n)}.  
 \end{equation}
 An obvious sufficient condition which follows from Sobolev's inequality 
 is $\sigma \in L^r(\R^n)$, 
 $r=\frac{np}{n(p-1-q)+p(1+q)}$.

 There is also an equivalent characterization of \eqref {energy-cond}  in terms of capacities due to Maz'ya and Netrusov (see  
 \cite{Maz}, Sec. 11.6): 
 \begin{equation}\label{cap-cond}
\int_0^{\sigma(\R^n)} \left [ \frac {t}{\varkappa(\sigma, t)}\right]^{\frac{1+q}{p-1-q}} dt < +\infty,
 \end{equation}
where $\varkappa(\sigma, t)=\inf\{ \, \text{cap}_p (E): \, \, \sigma(E)\ge t \}$. 

Thus, any one of the conditions \eqref{wolffintegral}, \eqref{energy-cond}, and \eqref{cap-cond} is necessary 
and sufficient  for the existence of a nontrivial finite energy solution to \eqref{eq1}. \bigskip

We now outline the contents of the paper. Sec.~\ref{preli} contains definitions and notations, along with several useful results on 
quasilinear equations that will be used below. In Sec.~\ref{energy} we study the corresponding integral inequalities, deduce a necessary and 
sufficient condition for the existence of a finite energy solution, and construct a minimal solution. 
Sec.~\ref{alaplace} is devoted to more general equations with the operator $ \text{div} \mathcal{A}(x,\nabla \cdot)$  in place of the $p$-Laplacian. In Sec.~\ref{unique} 
we prove the uniqueness property of finite energy solutions.

\section{Preliminaries}\label{preli} 

We first recall some notations and definitions. Given an open set $\Omega\subseteq \R^n$, we denote by 
$M^+(\Omega)$ the class of all nonnegative Borel measures in $\Omega$ which are finite on compact subsets of $\Omega$. The $\sigma$-measure 
of a measurable set $E\subset \Omega$ is denoted by 
$
 |E|_\sigma=\sigma(E) = \int_E d \sigma.
$

For $p>0$ and $\sigma \in  M^+(\Omega)$, we denote by $L^p(\Omega, d\sigma)$ ($L_{\text{loc}}^p(\Omega, d\sigma)$, respectively) the space of measurable functions $\varphi$ such that $|\varphi|^p$ is integrable (locally integrable) with respect to $\sigma$. For $u \in L^p(\Omega, d\sigma)$, we set 
$$
||u||_{L^p(\Omega, d \sigma)} = \Big(\int_\Omega |u|^p \, d \sigma\Big)^{\frac{1}{p}}. 
$$
When $d\sigma=dx$, we write $L^p(\Omega)$ (respectively $L_{\text{loc}}^p(\Omega)$), and 
denote Lebesgue measure of $E\subset \R^n$ by $|E|$.  

The Sobolev space $W^{1, p}(\Omega)$ ($W_{\text{loc}}^{1, p}(\Omega)$, respectively) is the space of all functions $u$ such that $u\in L^p(\Omega)$ and $|\nabla u|\in L^p(\Omega)$ ($u\in L_{\text{loc}}^p(\Omega)$ and $|\nabla u|\in L_{\text{loc}}^p(\Omega)$, respectively). By $L_0^{1, p}(\Omega)$ we denote the homogeneous Sobolev space, i.e., the space of functions 
$u \in W_{\text{loc}}^{1, p}(\Omega)$ such that $|\nabla u| \in L^p(\Omega)$, and 
$||\nabla u -\nabla \varphi_j||_{L^p(\Omega)} \to 0$ as $j \to \infty$ for a sequence $\varphi_j \in C^\infty_0(\Omega)$. 

When $1<p<n$ and $\Omega=\R^n$, we will identify $L_0^{1, p}(\R^n)$ with 
the space of all functions $u \in W_{\text{loc}}^{1, p}(\R^n)$ such that $u \in L^{\frac{np}{n-p}}(\R^n)$ 
and $|\nabla u| \in  L^p(\R^n)$. For $u \in L_0^{1, p}(\R^n)$, the norm $||u||_{1,p}$ is equivalent to 
$$
||u||_{L^{\frac{np}{n-p}}(\R^n)} + ||\nabla u||_{L^p(\R^n)}. 
$$
It is easy to see that $C^\infty_0(\R^n)$ is dense in $L_0^{1, p}(\R^n)$ with respect to 
this norm (see, e.g., \cite{MZ}, Sec. 1.3.4). 

If $1<p<n$ and $\Omega=\R^n$, then the dual Sobolev space $L^{-1, p'}(\R^n)= L_0^{1, p}(\R^n)^*$ is the  space of distributions $\nu$ such that 
$$
|| \nu ||_{-1, p'} = \sup \, \frac{\left | \langle u, \nu \rangle \right |}{||u||_{1, p}} < +\infty,
$$ 
where the supremum is taken over all $u \in L_0^{1, p}(\R^n)$, $u \not=0$. We write $\nu \in L_{{\rm loc}}^{-1, p'}(\R^n)$ if $\varphi \, \nu \in L^{-1, p'}(\R^n)$, for every $\varphi \in C^\infty_0(\R^n)$.

For $u\in W_{\text{loc}}^{1, p}(\Omega)$, we define the $p$-Laplacian $\Delta_p$ ($1<p<\infty$), in the
distributional sense, i.e., for every $\vp \in C_0^{\infty}(\Omega)$,
\begin{equation}\label{distri}
\langle  \Delta_p u,   \vp \rangle =\langle \text{div}(|\nabla u|^{p-2}\nabla u, \vp \rangle =
- \int_{\Omega}|\nabla u|^{p-2}\nabla u\cdot \nabla \vp \, dx.
\end{equation}

A \textit{finite energy} solution $u \ge 0$ to \eqref{eq1} is understood in the sense that 
$u \in L_0^{1, p} (\Omega)\cap L_{\text{loc}}^q(\Omega, d \sigma)$, and, for every  $\vp \in C_0^{\infty}(\Omega)$, 
\begin{equation}\label{q-distri}
\int_{\Omega}|\nabla u|^{p-2}\nabla u\cdot \nabla \vp \, dx = \int_\Omega u^q \vp \, d \sigma. 
\end{equation}

We need to extend the definition of solutions to $u$ not necessarily in $W_{\text{loc}}^{1, p}(\Omega)$. We will understand 
solutions in the following potential-theoretic sense using $p$-superharmonic functions, which is equivalent to the notion of 
locally renormalized solutions in terms of test functions (see \cite{KKT}).

A function $u \in W_{\text{loc}}^{1, p}(\Omega)$ is called $p$-harmonic if it satisfies the homogeneous equation
$\Delta_p u=0$. Every $p$-harmonic function has a continuous representative which coincides with $u$ a.e. 
(see \cite{HKM}).

As usual, $p$-superharmonic functions are defined via a comparison principle. We say that $u\!: \Omega \rightarrow (-\infty, \infty]$ is $p$-superharmonic if $u$ is lower semicontinuous, is not identically infinite in any component of $\Omega$, and satisfies the following comparison principle:  Whenever $D\subset\subset \Omega$ and $h \in C( \bar{D})$ is $p$-harmonic in $D$, with $h \leq u$ on $\partial D$, then $h \leq u$ in $D$. 

A $p$-superharmonic function $u$ does not necessarily belong to $\mathrm{W}^{1, p}_{\text{loc}}(\Omega)$, but its truncates $T_k(u) = \min (u,k)$ do, for all $k>0$.   In addition, $T_k(u)$ are supersolutions, i.e., $-\text{div}(|\nabla T_k(u)|^{p-2}\nabla T_k(u)) \geq 0$, in the distributional sense. We will need  the generalized gradient of a $p$-superharmonic function $u$ defined by \cite{HKM}: 
$$Du = \lim_{k\rightarrow \infty} \nabla (T_k(u)) .$$

We note that every $p$-superharmonic function $u$ has a quasicontinuous representative which coincides with $u$ quasieverywhere (q.e.), i.e., 
everywhere except for a set of $p$-capacity zero. We will assume that $u$ is always chosen this way. 

Let $u$ be $p$-superharmonic, and let $1\leq r < \frac{n}{n-1}$. Then $|Du|^{p-1}$, and hence $|Du|^{p-2}Du$, 
belong to $L^r_{\text{loc}}(\Omega)$ \cite{KM1}.  This allows us to define a nonnegative distribution $-\Delta_p u$ for each  $p$-superharmonic function $u$ by 
\begin{equation}
-\langle  \Delta_p u,   \vp \rangle = \int_{\Omega} |D u|^{p-2} D u\cdot \nabla \vp \, dx, 
\end{equation}
for all $\vp \in C^{\infty}_{0}(\Omega)$.  Then by the Riesz representation theorem there exists a unique measure $\mu[u] \in M^{+}(\Omega)$ so that $-\Delta_p u = \mu[u]$. 
\begin{defn}\label{psupersense} For a nonnegative locally finite measure $\omega$ in $\Omega$ we will say that 
$$- \Delta_p  u = \omega \quad {\rm in} \, \, \Omega$$ 
in  the potential-theoretic sense if $u$ is $p$-superharmonic in $\Omega$, and $\mu[u] = \omega$.  

Thus, $-\Delta_p u = \sigma u^q$ if $u\ge 0$ is $p$-superharmonic in $\Omega$, 
$u \in L^q_{loc} (\Omega, d \sigma)$, and $d \mu[u] = u^q \, d \sigma$. 
\end{defn}
\begin{defn}\label{psupersol} A function $u\ge 0$ is a supersolution to \eqref{eq1} if 
$u$ is $p$-superharmonic, $u \in L^q_{loc} (\Omega, d \sigma)$, and,  for every nonnegative $\vp \in C_0^{\infty}(\Omega)$, 
\begin{equation}\label{supersoleq}
\int_{\Omega} |D u|^{p-2} D u\cdot \nabla \vp \, dx \ge \int_\Omega u^q \vp \, d \sigma. 
\end{equation}
\end{defn}

Supersolutions to \eqref{eq1} in the sense of Definition~\ref{psupersol} are closely related to supersolutions associated with the integral equation  
\begin{equation} \label{integraleq}
u=\w(u^q \, d\sigma) \quad d \sigma\text{-}{\rm a.e.}, 
\end{equation}
that is, measurable functions $u\ge 0$ such that $\w(u^q d\sigma) \le u < \infty$ $d \sigma$-${\rm a.e.}$ 

We will use the following universal lower bound for supersolutions  obtained in \cite{CV}. 
%
%
\begin{theorem} \label{thmlowest}
Let $1<p<n$, $0<q<p-1$, and $\sigma \in M^+(\R^n)$. Suppose $u$ is a nontrivial $p$-superharmonic supersolution 
to \eqref{eq1}.  Then the inequality 
\begin{equation}\label{lowerest}
u \ge C \,\big( \w\sigma\big )^{\frac{p-1}{p-1-q}} 
\end{equation}
 holds, where $C$ is a positive constant depending only on $p, q$, and $n$.  
 
 The same lower bound holds for a 
nontrivial supersolution  to the integral equation \eqref{integraleq}. 
 If $p\ge n$, there is only a trivial supersolution $u=0$ on $\R^n$. 
\end{theorem}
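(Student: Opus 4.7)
The plan is to combine the lower half of the Kilpel\"{a}inen--Mal\'y Wolff potential estimate with a self-improving iteration.

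First, by Definition~\ref{psupersol} the Riesz measure $\mu[u]$ of a $p$-superharmonic supersolution dominates $u^q\,d\sigma$. For $1<p<n$, any nontrivial nonnegative $p$-superharmonic function on $\R^n$ satisfies $\inf_{\R^n} u = 0$ (otherwise $u$ would be bounded below by a positive constant, incompatible with the Liouville-type structure on $\R^n$ when $\mu[u]\not\equiv 0$). The lower half of \eqref{Uest} then yields
$$u(x) \;\ge\; \tfrac{1}{K}\,\w\mu[u](x) \;\ge\; \tfrac{1}{K}\,\w(u^q\,d\sigma)(x), \qquad x\in\R^n.$$
This is exactly the statement that $u$ is a supersolution of the integral equation \eqref{integraleq} up to a constant factor, so the $p$-Laplacian and integral-equation versions of the theorem reduce to a single argument.

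Second, I would prove a self-improving Wolff inequality of the form
$$\w\bigl(\sigma(\w\sigma)^{s}\bigr)(x) \;\ge\; c(s,p)\,(\w\sigma(x))^{1+s/(p-1)}, \qquad s>0,$$
by a Fubini-type manipulation of \eqref{wolffpot}: restrict to dyadic scales $t$ near the one at which $\sigma(B(x,t))/t^{n-p}$ is maximal, use that on such scales $\w\sigma(y)$ is comparable to $\w\sigma(x)$ for $y\in B(x,t)$ (slow variation of Wolff potentials at the critical scale), and insert the lower bound back into the definition of $\w$.

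Third, combining the two ingredients I would bootstrap: if inductively $u \ge c_k(\w\sigma)^{\beta_k}$, then
$$u \;\ge\; \tfrac{1}{K}\,\w(\sigma u^q) \;\ge\; \tfrac{c_k^{q/(p-1)}}{K}\,\w\bigl(\sigma(\w\sigma)^{q\beta_k}\bigr) \;\ge\; \tfrac{c_k^{q/(p-1)}\,c(q\beta_k,p)}{K}\,(\w\sigma)^{1+q\beta_k/(p-1)}.$$
The exponent recursion $\beta_{k+1}=1+q\beta_k/(p-1)$ has attracting fixed point $\alpha=(p-1)/(p-1-q)$, and a direct inspection shows that $c_k$ stays bounded away from $0$ uniformly in $k$. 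Passing $k\to\infty$ then gives $u \ge C(\w\sigma)^\alpha$ with $C$ depending only on $p,q,n$.

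The main obstacle will be seeding the iteration, since $\inf_{\R^n} u = 0$ rules out a uniform positive constant as an initial lower bound on all of $\R^n$. I plan to bypass this by first truncating $\sigma_R=\sigma\,\chi_{B(0,R)}$ so that $\w\sigma_R$ is bounded, using the strict positivity of $u$ on each compact set (from lower semicontinuity and the strong minimum principle for $p$-superharmonic functions) to seed the iteration against $\sigma_R$, and then sending $R\to\infty$ via monotone convergence to recover the bound against $\w\sigma$. Finally, the $p\ge n$ case is degenerate: $\w\sigma(x)\equiv+\infty$ for every nontrivial $\sigma$ (divergence of the tail of \eqref{wolffpot}), so the inequality $u\ge\tfrac{1}{K}\w(u^q d\sigma)$ with $u$ finite forces $u^q\,d\sigma\equiv 0$, reducing $u$ to a nonnegative $p$-harmonic function on $\R^n$ with $\inf u=0$, hence $u\equiv 0$ by Liouville.
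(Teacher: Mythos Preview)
The paper does not prove Theorem~\ref{thmlowest}; it is quoted from \cite{CV}. So there is no in-paper proof to compare against, and I will assess your argument on its own terms. Your overall strategy---Kilpel\"ainen--Mal\'y lower bound, then an exponent-boosting iteration based on a pointwise inequality of the form $\w((\w\sigma)^s\,d\sigma)\ge c(s)\,(\w\sigma)^{1+s/(p-1)}$, with a seeding step and passage to the limit---is the natural route and almost certainly the one taken in \cite{CV}. Two of your ingredients are already available in the paper you are working from: the self-improving Wolff inequality is exactly Lemma~\ref{Wolfflemma} (proved by a clean integration-by-parts, not by any ``slow variation at a critical scale''---that heuristic is not correct for general $\sigma$), and the analysis of the constants is straightforward since the recursion $\log c_{k+1}=\log\bigl(K^{-1}\kappa(q\beta_k)\bigr)+\tfrac{q}{p-1}\log c_k$ is a contraction in $\log c_k$ with coefficient $q/(p-1)<1$, so $c_k$ converges to a limit that does \emph{not} depend on the seed $c_0$.

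There are, however, two genuine gaps in your seeding and endgame. First, the claim that truncating to $\sigma_R=\sigma\chi_{B(0,R)}$ makes $\w\sigma_R$ bounded is false in general (local singularities of $\sigma$ are untouched by truncation). Fortunately you do not need boundedness: set $m=\inf_{B(0,R)}u>0$ (valid by lower semicontinuity and the strong minimum principle, since a nontrivial nonnegative $p$-superharmonic function is strictly positive everywhere), observe $u^q\,d\sigma\ge m^q\,d\sigma_R$ as measures on all of $\R^n$, and apply the Wolff lower bound once to get the \emph{global} seed $u\ge K^{-1}m^{q/(p-1)}\w\sigma_R$. Then iterate with $\sigma_R$ throughout; the contraction forces $c_k\to C^\ast$ universal, yielding $u\ge C^\ast(\w\sigma_R)^{\alpha}$, and $R\to\infty$ by monotone convergence gives the claim. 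Second, in the $p\ge n$ case, from $u^q\,d\sigma\equiv 0$ you can only conclude that $u$ vanishes $d\sigma$-a.e., not that $\mu[u]=0$; so $u$ remains $p$-superharmonic, not $p$-harmonic. The correct finish is that $\{u>0\}$ is open, hence $\sigma$-null, so if $\sigma\not\equiv 0$ there is a point where $u=0$, and then the strong minimum principle forces $u\equiv 0$. (Equivalently, invoke the Liouville theorem for nonnegative $p$-superharmonic functions on $\R^n$ when $p\ge n$; cf.\ the paper's pointer to \cite{HKM}, Theorem~3.53.) Finally, your assertion that $\inf_{\R^n}u=0$ for $1<p<n$ deserves a line of justification; it follows, for instance, from the upper Wolff bound applied on large balls, or from the maximum principle at infinity for $p$-superharmonic functions.
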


We will employ some fundamental results of the potential theory of quasilinear elliptic equations.  
The following important weak continuity result \cite{TW1} will be used to prove the existence of $p$-superharmonic solutions to quasilinear equations.
\begin{theorem} \label{weakconv} Suppose $\{u_n\}$  is a sequence of nonnegative 
$p$-super\-harmonic functions that converges a.e. to a 
$p$-superharmonic function $u$ in an open set $\Omega$. Then $\mu[u_n]$ converges weakly to $\mu[u]$, i.e.,
 for all $\vp \in C_0^{\infty}(\Omega)$, 
$$\lim_{n \to \infty}\int_{\Omega} \vp \, d \mu[u_n] = \int_{\Omega}\vp \, d \mu[u].$$
\end{theorem}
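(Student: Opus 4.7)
The plan is to first establish uniform local bounds on the measures $\mu[u_n]$, then to extract a weakly convergent subsequence $\mu[u_{n_j}] \rightharpoonup \nu \in M^+(\Omega)$, and finally to identify $\nu$ with $\mu[u]$ via a truncation argument. For the uniform bounds, testing $-\Delta_p u_n = \mu[u_n]$ against a nonnegative $\varphi \in C_0^{\infty}(\Omega)$ gives
\[
\int_\Omega \varphi \, d\mu[u_n] = \int_\Omega |Du_n|^{p-2}Du_n \cdot \nabla \varphi \, dx,
\]
and the right-hand side is controlled using the fact from \cite{KM1} that $|Du_n|^{p-1} \in L^r_{\text{loc}}$ for every $r < n/(n-1)$ with norms dominated by a power of a local $L^s$ norm of $u_n$ for some small $s>0$. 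Pointwise convergence together with lower semicontinuity gives uniform local $L^s$ bounds on $\{u_n\}$, hence uniform local bounds on $\mu[u_n]$ and weak-$\ast$ compactness yields the subsequence.

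To identify $\nu$, I would fix $k>0$ and work with the truncations $T_k(u_n) = \min(u_n, k)$. Each $T_k(u_n)$ is a bounded supersolution in $W^{1,p}_{\text{loc}}(\Omega)$; Caccioppoli estimates give uniform $W^{1,p}_{\text{loc}}$ bounds, and $T_k(u_n) \to T_k(u)$ a.e. and in $L^p_{\text{loc}}$. The crucial step is to upgrade to strong convergence $\nabla T_k(u_n) \to \nabla T_k(u)$ in $L^p_{\text{loc}}$. This is achieved via a Minty-type monotonicity argument: one tests the equations for $T_k(u_n)$ and $T_k(u)$ against $\eta \, (T_k(u_n) - T_k(u))$ with a smooth spatial cutoff $\eta$, and invokes the pointwise inequality
\[
\bigl(|\xi|^{p-2}\xi - |\zeta|^{p-2}\zeta\bigr) \cdot (\xi - \zeta) \ge c_p \, (|\xi| + |\zeta|)^{p-2} \, |\xi - \zeta|^2.
\]
Strong gradient convergence then implies $|\nabla T_k(u_n)|^{p-2}\nabla T_k(u_n) \to |\nabla T_k(u)|^{p-2}\nabla T_k(u)$ in $L^{p'}_{\text{loc}}$, and hence $\mu[T_k(u_n)] \rightharpoonup \mu[T_k(u)]$ weakly.

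Finally, to return to the full measures, observe that $\mu[u_n] - \mu[T_k(u_n)]$ is concentrated on $\{u_n \ge k\}$. Its local mass tends to zero as $k \to \infty$ uniformly in $n$: the Kilpel\"ainen--Mal\'y estimate $u_n \le C \, \w \mu[u_n]$ together with the uniform local $L^s$ bounds on $u_n$ control the defect by a quantity depending only on $k$. The analogous bound holds for $\mu[u] - \mu[T_k(u)]$. A diagonal argument ($n \to \infty$ first, then $k \to \infty$) yields $\nu = \mu[u]$, and since the limit is uniquely determined the full sequence $\mu[u_n]$ converges weakly to $\mu[u]$.

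The main obstacle is the strong $L^p_{\text{loc}}$ convergence of the gradients of the truncations. For $p \ge 2$ the monotonicity inequality above simplifies to a clean lower bound $c_p |\xi - \zeta|^p$, but in the singular range $1 < p < 2$ the weight $(|\xi|+|\zeta|)^{p-2}$ degenerates precisely where $\nabla T_k(u)$ is large; a careful H\"older splitting against the uniform $W^{1,p}$ bounds is needed, and this is the technically delicate part of the argument.
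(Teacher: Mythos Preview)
The paper does not prove this theorem. It is stated in Section~\ref{preli} as a preliminary result quoted from Trudinger and Wang \cite{TW1}, with no proof given. So there is nothing in the paper to compare your proposal against.

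That said, a few remarks on your outline, since you have attempted an independent proof of a genuinely deep result. The overall architecture (uniform local bounds $\Rightarrow$ compactness $\Rightarrow$ identification via truncation) is the right shape, but two of your steps hide real difficulties.

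First, in the Minty step you propose to ``test the equations for $T_k(u_n)$ and $T_k(u)$ against $\eta\,(T_k(u_n)-T_k(u))$''. The truncations $T_k(u_n)$ are supersolutions, not solutions: $-\Delta_p T_k(u_n)$ is a nonnegative measure, not zero. The test function $\eta\,(T_k(u_n)-T_k(u))$ changes sign, so you cannot simply drop the measure terms, and the standard Minty trick does not go through as written. The actual proof in \cite{TW1} obtains gradient convergence by a different route, using weak Harnack and integral estimates specific to $\mathcal{A}$-superharmonic functions rather than a direct monotonicity argument.

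Second, your control of the defect $\mu[u_n]-\mu[T_k(u_n)]$ appeals to the global upper bound $u_n \le C\,\w\mu[u_n]$ from Theorem~\ref{thmpotest}. That estimate requires $\inf_{\R^n} u_n = 0$ and is stated on all of $\R^n$; on a general open set $\Omega$ the local Kilpel\"ainen--Mal\'y bound carries an additive term $\inf_{B} u_n$ that you have not controlled. Converting uniform local $L^s$ bounds on $u_n$ into a uniform-in-$n$ smallness of $\mu[u_n](\{u_n\ge k\}\cap K)$ as $k\to\infty$ is precisely the delicate point, and your sketch does not address it.

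In short: the paper simply cites \cite{TW1}, and your proposal, while structurally sensible, leaves open the two technical steps that make the Trudinger--Wang theorem nontrivial.
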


The next result  \cite{KM2} is concerned with global pointwise estimates of nonnegative $p$-superharmonic functions in terms 
of Wolff's potentials discussed in the Introduction.  
\begin{theorem} \label{thmpotest} Let $1<p\le n$. Let $u$ be a $p$-superharmonic function in $\R^n$ with $\inf_{\R^n}u=0$. 
If $\omega$ is a nonnegative Borel  measure in $\R^n$ such that $-\Delta_pu=\omega$, then 
$$ \frac{1}{K} \, \w\omega (x) \le u(x) \le K \, \w\omega(x), \quad x \in \R^n,$$
where $K$ is a positive constant depending only on $n, p.$
\end{theorem}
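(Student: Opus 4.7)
The plan is to establish the two one-sided bounds separately, as the lower bound is essentially a consequence of the comparison principle while the upper bound requires a delicate iteration that is the real content of the Kilpel\"ainen--Mal\'y theorem.

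For the \textbf{lower bound}, I would proceed as follows. Fix $x_0 \in \R^n$ and $r > 0$, and write $B_r = B(x_0, r)$. Let $w$ be the $p$-superharmonic function solving $-\Delta_p w = \omega|_{B_r}$ in $B_{2r}$ with zero boundary values on $\partial B_{2r}$, extended by $0$ outside. A capacitary/fundamental-solution computation (testing the weak formulation against a standard cut-off supported on $B_{2r}$ and equal to $1$ on $B_r$) yields
\[
\inf_{B_r} w \;\ge\; c_1 \left( \frac{\omega(B_r)}{r^{n-p}} \right)^{\frac{1}{p-1}}.
\]
By the weak comparison principle, $u \ge w + \inf_{\partial B_{2r}} u$ in $B_{2r}$, so
\[
\inf_{B_r} u \;\ge\; c_1\, \alpha(B_r) + \inf_{B_{2r}} u, \qquad \alpha(B_r) := \left(\frac{\omega(B_r)}{r^{n-p}}\right)^{\frac{1}{p-1}}.
\]
Iterating this estimate with $r_k = 2^k r$ for $k = 0, 1, 2, \dots$ and using the hypothesis $\inf_{\R^n} u = 0$ (which forces $\inf_{B_{r_k}} u \to 0$ since $B_{r_k} \uparrow \R^n$), a telescoping sum over the dyadic scales controls $\mathbf{W}_{1,p}\omega(x_0)$ by $u(x_0)$ up to a constant, after lower-semicontinuity passes from infima on shrinking balls to the pointwise value of $u$ at Lebesgue points (and q.e., since $u$ is chosen quasicontinuous).

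The \textbf{upper bound} is the hard direction and occupies the bulk of the original argument. The strategy is a nonlinear analogue of the Riesz potential bound, carried out via a selection-and-iteration scheme on dyadic balls. On each ball $B = B(x_0, r)$, I would compare $u$ with its $p$-harmonic replacement $h_B$ (the $p$-harmonic function in $B$ with the same boundary values as $u$ on $\partial B$, in the Perron sense). Since $-\Delta_p u = \omega \ge 0$, one has $u \ge h_B$ on $B$, and a testing argument combined with the weak Harnack inequality for $p$-harmonic functions produces a reverse bound of the form
\[
\sup_B u \;\le\; C\bigl( \inf_B h_B + \alpha(cB)\bigr) \;\le\; C\bigl(\inf_B u + \alpha(cB)\bigr)
\]
on a slightly enlarged ball. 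The Kilpel\"ainen--Mal\'y scheme then picks a subsequence of dyadic radii $r_j = 2^{-j} r_0$ at which either (a) the measure contribution $\alpha(B(x_0, r_j))$ dominates, in which case the corresponding term is summed explicitly into the Wolff potential, or (b) the oscillation of $u$ contracts geometrically, which is controlled using interior H\"older regularity for $p$-harmonic functions. Summing the two classes of contributions across all scales and letting the outer scale $r_0 \to \infty$ (so that $\inf_{B_{r_0}} u \to 0$) produces $u(x_0) \le K\, \mathbf{W}_{1,p}\omega(x_0)$.

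The \textbf{main obstacle} is the geometric contraction step in the upper bound: making the selection procedure quantitative so that the excess $\sup_B u - c\,\alpha(cB)$ decays by a fixed factor under scale refinement requires the precise form of the Cacciopoli inequality and the interior H\"older/Harnack theory for solutions of $-\Delta_p v = \mu$ with a small signed-measure data on the right, together with a careful bookkeeping of the error when $h_B$ is replaced by a ball-wise truncation of $u$. Once this iteration lemma is in place, the conclusion is combinatorial. Finally, I note that the restriction $p \le n$ enters only to guarantee that the Wolff potential is not automatically infinite at every point and that the condition $\inf_{\R^n} u = 0$ is compatible with nontrivial positive measures $\omega$.
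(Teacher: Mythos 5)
The paper does not prove this statement at all: Theorem~\ref{thmpotest} is quoted verbatim from Kilpel\"ainen--Mal\'y \cite{KM2} and used as a black box. So there is no in-paper proof to compare against; what you have attempted is a sketch of the Kilpel\"ainen--Mal\'y argument itself.

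Your lower-bound sketch is essentially right and is a standard argument: the single-scale estimate for the Dirichlet solution $w$, comparison with $u$ (after subtracting a constant), and then a telescoping sum over increasing dyadic balls, using $\inf_{\R^n}u=0$ to kill the boundary term. Two small points of care: the comparison gives $\inf_{B_r}u \ge c\,\alpha(B_r)+\inf_{\partial B_{2r}}u$, and to continue the telescoping you must pass from a boundary infimum to a ball infimum, which costs a scale factor (one typically iterates over $r_k=4^kr$, or interlaces two sums); and you do not need lower semicontinuity to conclude $u(x_0)\ge\inf_{B_r(x_0)}u$ --- that is immediate since $x_0\in B_r(x_0)$. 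These are cosmetic. The substance is fine.

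The upper-bound sketch has a genuine gap. The claimed intermediate inequality
\[
\sup_B u \;\le\; C\bigl(\inf_B u + \alpha(cB)\bigr)
\]
is false as stated, even on a slightly enlarged ball. Take $\omega = M\delta_{x_0}$ and $B=B(x_0,r)$: then $\sup_B u = u(x_0)=+\infty$, while $\inf_B u$ and $\alpha(cB)=\bigl(M/(cr)^{n-p}\bigr)^{1/(p-1)}$ are both finite. The Kilpel\"ainen--Mal\'y iteration does not control pointwise suprema at all; it works with level-set/averaged quantities (a quasi-supremum defined via a fixed density threshold on superlevel sets, coupled with Caccioppoli and De Giorgi-type estimates on truncations), and the decay that is iterated is for these averaged quantities, with the pointwise value of $u$ recovered only at the end by lower semicontinuity. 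Any scheme that tries to iterate a Harnack-type bound on $\sup_B u$ with an additive $\alpha$-term cannot close, precisely because of the point-mass example. So the structure of your upper-bound argument would have to be rebuilt around these integral/level-set quantities --- this is not a bookkeeping issue but the central technical idea of \cite{KM2}. Finally, a minor correction to your closing remark: the restriction $1<p\le n$ is not there to keep $\w\omega$ from being identically $+\infty$ (for $p=n$, $\w\omega\equiv+\infty$ whenever $\omega\neq0$); rather, for $p\ge n$ the only nonnegative $p$-superharmonic functions on all of $\R^n$ with $\inf u=0$ are trivial, so the statement is vacuous there, consistent with the last line of the main Theorem in the Introduction.
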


The following theorem is due to Brezis and Browder \cite{BB} (see also 
\cite{MZ}, Theorem 2.39). 
\begin{theorem} \label{brezisbrowder} Let $1<p<n$. Suppose $u \in L^{1,p}_0(\R^n)$,  and $\mu \in M^+(\R^n) \cap L^{-1,p'}(\R^n)$. 
Then $u \in L^1 (\R^n, \mu)$ (for a  quasicontinuous representative of $u$), and 
\begin{equation} \label{Rieszpote}
\langle \mu, u\rangle = \int_{\R^n} u \, d \mu. 
\end{equation}  
\end{theorem}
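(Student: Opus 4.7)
\medskip

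\noindent\textbf{Proof plan.}
The strategy is to reduce the identity to the bounded nonnegative case via truncation, handle that case by a compactly supported smooth approximation using a capacity-based selection of a quasi-everywhere convergent subsequence, and then pass to the limit in the truncation.

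First I would record the fact that every $\mu \in M^+(\R^n)\cap L^{-1,p'}(\R^n)$ vanishes on Borel sets of $p$-capacity zero: if $E$ is compact with $\mathrm{cap}_p(E)=0$, pick $\phi_k\in C_0^\infty(\R^n)$ with $\phi_k\ge 1$ on $E$ and $\|\nabla \phi_k\|_{L^p}\to 0$; then $\mu(E)\le \langle \mu,\phi_k\rangle \le \|\mu\|_{-1,p'}\,\|\phi_k\|_{1,p}\to 0$, and by inner regularity the same holds for all Borel $E$ of zero $p$-capacity. In particular, the quasicontinuous representative of a function in $L_0^{1,p}(\R^n)$ is well-defined $\mu$-a.e.

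Next I would treat $u\ge 0$ bounded (say $0\le u\le M$) with $u\in L_0^{1,p}(\R^n)$. Choose $\varphi_j\in C_0^\infty(\R^n)$ with $\|\nabla(u-\varphi_j)\|_{L^p}\to 0$; replacing $\varphi_j$ by $\max(\min(\varphi_j,M),0)$ preserves convergence in $L_0^{1,p}$ (Stampacchia-type truncation) and yields $0\le \varphi_j\le M$. By the standard Maz'ya-type estimate $\mathrm{cap}_p(\{|\varphi_j-u|>\lambda\}) \le \lambda^{-p}\,\|\nabla(\varphi_j-u)\|_{L^p}^{p}$, a subsequence (still denoted $\varphi_j$) converges to $u$ quasieverywhere, hence $\mu$-a.e.\ by the first step. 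Since $|\varphi_j|\le M$ and $\mu$ has finite mass on the supports involved (via $\mu\in L^{-1,p'}$ applied to a cutoff), dominated convergence would give $\int \varphi_j\,d\mu\to \int u\,d\mu$; meanwhile $\langle \mu,\varphi_j\rangle\to \langle\mu,u\rangle$ by continuity of the duality pairing, so $\langle \mu,u\rangle=\int u\,d\mu$. (For the dominated convergence step, the technical point is to show $\mu$ is finite against any $L_0^{1,p}$-bounded function, which follows from $|\langle \mu,\phi\rangle|\le \|\mu\|_{-1,p'}\|\phi\|_{1,p}$ applied to a nonnegative cutoff $\phi\ge \chi_K$ on compact sets.)

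The general case follows in two steps. Split $u=u^+-u^-$; since truncations $u^\pm\in L_0^{1,p}(\R^n)$ as well, it suffices to handle $u\ge 0$. Define $u_k=\min(u,k)$; then $u_k\to u$ in $L_0^{1,p}$ (since $\nabla u_k=\nabla u\,\chi_{\{u<k\}}$ and dominated convergence applies) and pointwise q.e., hence $\mu$-a.e. The bounded case gives $\langle\mu,u_k\rangle=\int u_k\,d\mu$ for each $k$. Letting $k\to\infty$, the left side tends to $\langle\mu,u\rangle$ by continuity, while the right side tends to $\int u\,d\mu$ by monotone convergence; in particular $\int u\,d\mu\le \|\mu\|_{-1,p'}\|u\|_{1,p}<\infty$, so $u\in L^1(\R^n,\mu)$, and the identity \eqref{Rieszpote} holds.

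I expect the main obstacle to be the subsequence convergence quasieverywhere together with the correct choice of truncated approximants that remain bounded by $M$; once that is set up, $\mu$'s insensitivity to zero-capacity sets and monotone/dominated convergence do the rest. A secondary nuisance is verifying finiteness of $\mu$ against cutoffs in order to justify dominated convergence, which is handled by testing $\mu\in L^{-1,p'}$ against smooth nonnegative majorants of characteristic functions of compact sets.
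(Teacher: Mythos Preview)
The paper does not give its own proof of this theorem: it is quoted as a known result due to Brezis and Browder \cite{BB}, with a pointer to \cite{MZ}, Theorem~2.39, and is then used as a black box (e.g.\ in Lemma~\ref{neceFES} and Lemma~\ref{wcp}). So there is nothing in the paper to compare your argument against.

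That said, your outline is essentially the standard proof one finds in the cited references: (i) show $\mu$ is absolutely continuous with respect to $p$-capacity, (ii) handle bounded nonnegative $u$ by smooth approximation with a q.e.-convergent subsequence, (iii) pass to general $u$ via truncation and monotone convergence. One technical point worth tightening is the dominated-convergence step in the bounded case: after truncating the approximants to $[0,M]$ their supports need not stay in a fixed compact set, so the constant $M$ is not a priori $\mu$-integrable. The usual fix is either to first multiply $u$ by a smooth cutoff (so all approximants live in a fixed ball and $\mu$ restricted there is finite), and then let the cutoff exhaust $\R^n$, or to argue via Fatou's lemma on both $\varphi_j$ and on suitable complementary nonnegative quantities. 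Apart from that, your plan is correct and matches the classical argument.
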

We observe that if, under the assumptions of this theorem, $-\Delta_pu = \mu$, then it follows 
(see \cite{MZ}, Theorem 2.34)  
\begin{equation} \label{Rieszpote1}
\langle \mu, u\rangle = \int_{\R^n} u \, d \mu = ||u||^p_{1,p} = ||\mu||_{-1,p'}^{p'}. 
\end{equation}

For $0< \alpha <n$ and $\sigma \in M^+(\R^n)$, the Riesz potential of $\sigma$ is defined by
\begin{equation} \label{Rieszpote2}
\I_{\alpha}\sigma(x)=\int_0^{\infty} \frac{\sigma(B(x, r))}{r^{n-\alpha}}\frac{dr}{r} =
 \frac{1}{n-\alpha} \int_{\R^n} \frac{d \sigma(y)}{|x-y|^{n-\alpha}}, \quad x \in \R^n. 
\end{equation}

For $1<p<\infty$ and $0<\alpha<\frac{n}{p}$, the Wolff potential of order $\alpha$ is defined by 
$$
\wa\sigma(x)= \int_0^\infty\left( \frac{\sigma(B(x,s))}{s^{n-\alpha p}}\right)^{\frac{1}{p-1}}\frac{ds}{s}, \quad x \in \R^n.
$$
Note that ${\bf W}_{\alpha,2} \sigma = I_{2 \alpha} \sigma$ if $0<\alpha < \frac n 2$. In particular, ${\bf W}_{1,2} \sigma = 
I_2 \sigma$ is the Newtonian potential for $n \ge 3$. 

We will need the following Wolff's inequality \cite{HW} (see also \cite{AH}, Sec. 4.5) 
which gives precise estimates of the energy associated with the Wolff potential:
\begin{theorem} \label{thmwolffineq}  Suppose $1<p<\infty$, $0<\alpha<\frac n p$, and $\sigma \in M^+(\R^n)$. 
Then there exists a constant $C>0$ depending only on $p, \alpha$, and $n$ such that 
\begin{equation} \label{wolffinequality}
\frac 1 C \, \int_{\R^n} (I_\alpha \sigma)^{p'} \, dx \le  \int_{\R^n} {\bf W}_{\alpha,p} \sigma \, d \sigma \le C  \, \int_{\R^n} (I_\alpha \sigma)^{p'} \, dx, 
\end{equation}
where $\frac 1 p + \frac {1}{p'}=1$. 
\end{theorem}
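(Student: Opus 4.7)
The plan is to reduce both integrals in \eqref{wolffinequality} to a common discrete energy indexed by dyadic cubes, and then compare. Let $\mathcal{D}$ denote the standard system of dyadic cubes in $\R^n$, with side length $\ell(Q)$, and set
$$E(\sigma) = \sum_{Q \in \mathcal{D}} \Big(\frac{\sigma(Q)}{\ell(Q)^{n-\alpha p}}\Big)^{1/(p-1)} \sigma(Q) = \sum_{Q \in \mathcal{D}} \frac{\sigma(Q)^{p'}}{\ell(Q)^{(n-\alpha p)/(p-1)}},$$
using $1/(p-1) + 1 = p'$ and $(n-\alpha p)/(p-1) = p'(n-\alpha)-n$. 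I would prove $\int \wa\sigma\,d\sigma \approx E(\sigma) \approx \int (I_\alpha\sigma)^{p'}\,dx$, with constants depending only on $n$, $p$, $\alpha$.

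The first equivalence, $\int \wa\sigma\,d\sigma \approx E(\sigma)$, follows from Fubini after discretizing the scale $t$ in \eqref{wolffpot}. For $x \in Q \in \mathcal{D}$ with $\ell(Q) = 2^k$, the inclusions $Q \subset B(x, C 2^k)$ and $B(x, 2^{k-1}) \subset 3Q$ yield
$$\int_{2^{k-1}}^{2^k} \Big(\frac{\sigma(B(x,t))}{t^{n-\alpha p}}\Big)^{1/(p-1)} \frac{dt}{t} \approx \Big(\frac{\sigma(Q^*)}{\ell(Q^*)^{n-\alpha p}}\Big)^{1/(p-1)},$$
where $Q^*$ is a bounded neighborhood of $Q$. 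Summing over $k$ and integrating against $d\sigma$, together with a bounded-overlap / translated-grid argument to replace $Q^*$ by $Q$, gives the claim.

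The equivalence $E(\sigma) \approx \int (I_\alpha\sigma)^{p'}\,dx$ is the deeper part. The upper bound $\int (I_\alpha\sigma)^{p'}\,dx \lesssim E(\sigma)$ is obtained by decomposing the Riesz kernel into annuli, writing $I_\alpha\sigma(x) \approx \sum_k 2^{k\alpha}\sigma(B(x,2^k))/2^{kn}$, and applying duality together with Minkowski in $\ell^{p'}$. The reverse bound $E(\sigma) \lesssim \int (I_\alpha\sigma)^{p'}\,dx$ is the main obstacle and the core of Hedberg--Wolff's theorem \cite{HW}: one must recover the discrete $\ell^{p'}$-size of the dyadic potential from the mere $L^{p'}$-size of $I_\alpha\sigma$. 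The standard device is a Calder\'on--Zygmund stopping-time selection of a family of maximal dyadic cubes $\{Q_j\}$ on which the normalized mass $\sigma(Q)/\ell(Q)^{n-\alpha p}$ first exceeds successive dyadic thresholds. Because $I_\alpha\sigma(x) \gtrsim \sigma(Q_j)/\ell(Q_j)^{n-\alpha}$ for $x$ in a bounded dilate of $Q_j$, maximality produces a sparse decomposition that converts $E(\sigma)$ into a sum of pointwise lower bounds on $(I_\alpha\sigma)^{p'}$, which is then absorbed into $\|I_\alpha\sigma\|_{p'}^{p'}$.
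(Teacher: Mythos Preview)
The paper does not prove Theorem~\ref{thmwolffineq}; it is quoted from Hedberg--Wolff \cite{HW} (see also \cite{AH}, Sec.~4.5) and used as a black box. So there is no in-paper argument to compare against. Your overall strategy---reduce both sides to the dyadic energy $E(\sigma)$ and then compare---is indeed the standard route taken in those references, and the equivalence $\int\wa\sigma\,d\sigma\approx E(\sigma)$ via scale discretization is fine.

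However, you have the two directions of $E(\sigma)\approx\int_{\R^n}(I_\alpha\sigma)^{p'}\,dx$ reversed. The genuinely hard inequality, and Wolff's actual contribution in \cite{HW}, is
\[
\int_{\R^n}(I_\alpha\sigma)^{p'}\,dx \;\le\; C\,E(\sigma)
\qquad\Big(\text{equivalently } \le C\int_{\R^n}\wa\sigma\,d\sigma\Big),
\]
not the other way around. Your proposed proof of this via ``duality together with Minkowski in $\ell^{p'}$'' does not work: writing $I_\alpha\sigma\approx\sum_Q a_Q\chi_Q$ with $a_Q=\sigma(Q)/\ell(Q)^{n-\alpha}$, one needs $\big\|\sum_Q a_Q\chi_Q\big\|_{p'}^{p'}\lesssim\sum_Q\|a_Q\chi_Q\|_{p'}^{p'}$, which is a \emph{reverse} Minkowski-type bound and fails in general since $p'>1$. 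It is precisely this direction that requires the stopping-time (or good-$\lambda$) machinery you describe.

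Conversely, the bound $E(\sigma)\lesssim\int_{\R^n}(I_\alpha\sigma)^{p'}\,dx$, which you label as ``the main obstacle,'' is the easy one. For instance, one has the pointwise domination $\wa\sigma(x)\le C\,I_\alpha\big((I_\alpha\sigma)^{p'-1}\big)(x)$ by the Havin--Maz'ya potential (check: for $y\in B(x,t)$, $I_\alpha\sigma(y)\ge c\,\sigma(B(x,t))/t^{n-\alpha}$, then integrate over $y$ and $t$). Integrating against $d\sigma$ and using the self-adjointness of $I_\alpha$ gives
\[
\int_{\R^n}\wa\sigma\,d\sigma \;\le\; C\int_{\R^n} I_\alpha\big((I_\alpha\sigma)^{p'-1}\big)\,d\sigma
\;=\; C\int_{\R^n}(I_\alpha\sigma)^{p'}\,dx
\]
directly, with no stopping time needed. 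So your sparse-decomposition sketch is aimed at the wrong inequality, and the inequality you call easy is the one that actually needs it.
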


\section{Existence and minimality of finite energy solutions} \label{energy}

In this section, we deduce a necessary and sufficient condition for the existence of a finite energy  solution, and construct a  minimal solution  to \eqref{eq1}. We will assume that $1<p<n$, since for $p\ge n$ there are only trivial nonnegative supersolutions on $\R^n$ (Theorem~\ref{thmlowest}; see also \cite{HKM}, Theorem 3.53).

\begin{lemma}\label{neceFES} Suppose there exists a nontrivial supersolution $u \ge 0$,  
$u \in L_0^{1,p}(\R^n)\cap L^q_{{\rm loc}}(\R^n, d \sigma)$ to \eqref{eq1}. 
Then $$-\Delta_p u \in L^{-1, p'}(\R^n) \cap M^+(\R^n).$$ 
Moreover,  $u \in L^{1+q}(\R^n, \sigma)$ (for a 
quasicontinuous  representative of $u$), and condition  \eqref{wolffintegral} holds. 
\end{lemma}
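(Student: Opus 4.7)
The plan is to pass through three observations in turn. First, I would verify that $\mu[u] := -\Delta_p u$ belongs to $L^{-1,p'}(\R^n) \cap M^+(\R^n)$. Since $u \in L_0^{1,p}(\R^n)$ and $u$ is $p$-superharmonic, in particular $u \in W_{\text{loc}}^{1,p}(\R^n)$, and the generalized gradient $Du$ coincides a.e.\ with $\nabla u$. For any $\vp \in C_0^\infty(\R^n)$, H\"older's inequality yields
$$
\left|\langle \mu[u], \vp\rangle\right| = \left|\int_{\R^n} |\nabla u|^{p-2}\,\nabla u \cdot \nabla \vp\, dx\right| \le \|\nabla u\|_{L^p(\R^n)}^{p-1}\, \|\nabla \vp\|_{L^p(\R^n)},
$$
so by density of $C_0^\infty(\R^n)$ in $L_0^{1,p}(\R^n)$, $\mu[u]$ extends to a bounded linear functional with $\|\mu[u]\|_{-1,p'} \le \|u\|_{1,p}^{p-1}$. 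Positivity of $\mu[u]$ follows from $p$-superharmonicity.

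Second, I would apply Theorem~\ref{brezisbrowder} to $\mu[u] \in M^+(\R^n)\cap L^{-1,p'}(\R^n)$ to conclude that $u$ (in its quasicontinuous representative) lies in $L^1(\R^n, d\mu[u])$ and that $\langle \mu[u], u\rangle = \int_{\R^n} u\, d\mu[u]$. Since $-\Delta_p u = \mu[u]$, identity \eqref{Rieszpote1} further gives
$$
\int_{\R^n} u\, d\mu[u] = \|u\|_{1,p}^p < \infty.
$$
The supersolution condition \eqref{supersoleq} means precisely that $d\mu[u] - u^q\, d\sigma \ge 0$ as a Borel measure; integrating the nonnegative quasicontinuous function $u$ against this inequality yields
$$
\int_{\R^n} u^{1+q}\, d\sigma \le \int_{\R^n} u\, d\mu[u] = \|u\|_{1,p}^p < \infty,
$$
proving $u \in L^{1+q}(\R^n, d\sigma)$.

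Finally, Theorem~\ref{thmlowest} applied to the nontrivial supersolution $u$ produces a constant $C > 0$ with $u \ge C\,(\w\sigma)^{(p-1)/(p-1-q)}$ pointwise on $\R^n$. Raising to the power $1+q$ and integrating against $\sigma$ gives
$$
C^{1+q} \int_{\R^n} (\w\sigma)^{(1+q)(p-1)/(p-1-q)}\, d\sigma \le \int_{\R^n} u^{1+q}\, d\sigma < \infty,
$$
which is exactly \eqref{wolffintegral}.

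The conceptual core is the second step: converting the distributional supersolution inequality into the concrete $L^{1+q}(\sigma)$ estimate on $u$. The obstacle is that $u$ need not be bounded or compactly supported, so one cannot directly plug $u$ into \eqref{supersoleq} as a test function. The Brezis--Browder theorem, made applicable by the first step, is precisely what legitimizes the pairing $\int u\, d\mu[u]$ and delivers the finite bound $\|u\|_{1,p}^p$.
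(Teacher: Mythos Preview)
Your proof is correct and follows essentially the same three-step structure as the paper's: H\"older's inequality for the dual-space membership of $-\Delta_p u$, the Brezis--Browder theorem for the $L^{1+q}(\sigma)$ estimate, and Theorem~\ref{thmlowest} for condition~\eqref{wolffintegral}. The only cosmetic difference is that the paper applies Brezis--Browder to $\mu = u^q\, d\sigma$ (after noting $0\le \mu \le -\Delta_p u$ forces $\mu \in L^{-1,p'}$) and passes to the limit in the supersolution inequality along an approximating sequence $\varphi_j \to u$, whereas you apply Brezis--Browder and \eqref{Rieszpote1} directly to $\mu[u]=-\Delta_p u$ and then integrate the measure inequality $\mu[u]\ge u^q\, d\sigma$ against $u$; the two arguments are logically equivalent.
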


\begin{proof} Suppose 
$u \in L_0^{1,p}(\R^n)\cap L^q_{{\rm loc}}(\R^n, d \sigma)$ is a supersolution  to \eqref{eq1}. 
Then by H\"older's inequality,  for every $\vp \in C^\infty_0(\R^n)$, 
$$
|\langle \Delta_p u,  \vp\rangle| =\left | \int_{\R^n} |\nabla u|^{p-2}\nabla u \cdot \nabla \vp \, dx\right | 
\le ||\nabla u||_{L^p(\R^n)}^{p-1} ||\nabla \vp||_{L^p(\R^n)}. 
$$
Hence, $\Delta_p u \in L^{-1,p'}(\R^n)$. If $\vp \ge 0$, then 
$$
- \langle \Delta_p u,  \vp\rangle = \int_{\R^n} |\nabla u|^{p-2}\nabla u \cdot \nabla \vp \, dx \ge 
\int_{R^n} \vp \, u^q  \, d \sigma \ge 0, 
$$
and consequently $-\Delta_p u \in M^+(\R^n)$. 

It follows that  $d \mu = u^q \, d \sigma \in M^+(\R^n)\cap L^{-1,p'}(\R^n)$. 
Let  $\{\vp_j\}$ be a sequence of nonnegative $C^\infty_0$-functions  such that 
$\vp_j \to u$ in $L^{1,p}_0 (\R^n)$. By definition, 
$$
\int_{\R^n}|\nabla u|^{p-2}\nabla u \cdot \nabla \vp_j \, dx \ge  \langle \mu, \vp_j \rangle. 
$$
 Hence, 
$$ \int_{\R^n} |\nabla u|^p \, dx = \lim_{j \to \infty}\int_{\R^n}|\nabla u|^{p-2}\nabla u \cdot \nabla \vp_j \, dx \ge  \lim_{j \to \infty} \langle \mu, \vp_j\rangle 
= \langle \mu, u \rangle.$$
Let us assume as usual that $u$ coincides with its quasicontinuous  representative.  
Then, applying  Theorem \ref{brezisbrowder}, we deduce 
$$
\langle \mu, u \rangle = \int_{\R^n} u \, d \mu = \int_{\R^n} u^{1+q} \, d \sigma < \infty. 
$$
By Theorem \ref{thmlowest}, it follows that if $u \not\equiv 0$, then $u \ge C \,\big( \w\sigma\big )^{\frac{p-1}{p-1-q}}$, and consequently  
\eqref{wolffintegral} holds. 
\end{proof}

\begin{lemma} \label{Wolfflemma} For every $r>0$, 
\begin{equation}\label{Wolffest}
\wa((\wa\sigma)^{r}d\sigma)(x) \ge C (\wa\sigma(x))^{\frac{r}{p-1}+1}, \quad x \in \R^n, 
\end{equation}
where $C$ depends only on $p$, $q$, $r$, $\alpha$, and $n$. 
\end{lemma}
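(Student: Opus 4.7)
The plan is to reduce everything to a pointwise lower bound for $\wa \sigma(y)$ when $y$ lies in a ball $B(x,s)$, and then to recognize the remaining $s$-integral as an antiderivative.

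First I would localize the inner Wolff potential. For $y \in B(x,s)$ and $u \ge s$, the triangle inequality gives $B(x,u) \subset B(y, u+s) \subset B(y, 2u)$, so $\sigma(B(y,2u)) \ge \sigma(B(x,u))$. Discarding the contribution from scales less than $2s$ in the definition of $\wa \sigma(y)$ and then substituting $t = 2u$ yields
\begin{equation*}
\wa\sigma(y) \;\ge\; \int_{2s}^{\infty} \Bigl(\tfrac{\sigma(B(y,t))}{t^{n-\alpha p}}\Bigr)^{\frac{1}{p-1}} \tfrac{dt}{t} \;\ge\; c \int_{s}^{\infty}\Bigl(\tfrac{\sigma(B(x,u))}{u^{n-\alpha p}}\Bigr)^{\frac{1}{p-1}}\tfrac{du}{u} \;=\; c\, F(s),
\end{equation*}
where $c = 2^{-(n-\alpha p)/(p-1)}$ and $F(s) := \int_s^\infty\bigl(\sigma(B(x,u))/u^{n-\alpha p}\bigr)^{1/(p-1)}\,du/u$. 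Note that $F(0) = \wa\sigma(x)$, $F$ is absolutely continuous and nonincreasing on $(0,\infty)$, and $-F'(s) = \bigl(\sigma(B(x,s))/s^{n-\alpha p}\bigr)^{1/(p-1)}/s$ a.e.

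Next I would insert this lower bound into the outer Wolff potential. Integrating $(\wa\sigma)^r$ over $B(x,s)$ against $d\sigma$ and using $\wa\sigma(y) \ge cF(s)$ for $y\in B(x,s)$ gives $\int_{B(x,s)}(\wa\sigma)^{r}\,d\sigma \ge c^{r} F(s)^{r}\,\sigma(B(x,s))$. Substituting into the definition of $\wa((\wa\sigma)^r d\sigma)(x)$ yields
\begin{equation*}
\wa\bigl((\wa\sigma)^{r}d\sigma\bigr)(x) \;\ge\; c^{\frac{r}{p-1}}\int_{0}^{\infty} F(s)^{\frac{r}{p-1}}\,\Bigl(\tfrac{\sigma(B(x,s))}{s^{n-\alpha p}}\Bigr)^{\frac{1}{p-1}}\tfrac{ds}{s} \;=\; c^{\frac{r}{p-1}}\int_{0}^{\infty} F(s)^{\frac{r}{p-1}}\bigl(-F'(s)\bigr)\,ds.
\end{equation*}

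Finally, the last integral is exactly $\tfrac{p-1}{r+p-1}\,F(0)^{\frac{r}{p-1}+1}$ by the chain rule (or a monotone class/approximation argument if one wishes to avoid differentiating a merely monotone function), which gives the claimed bound with $C = c^{r/(p-1)}\,(p-1)/(r+p-1)$. The only mildly delicate point is the geometric inclusion that relates $\sigma(B(y,\cdot))$ to $\sigma(B(x,\cdot))$; once that comparison is in hand and the tail representation $F(s)$ is isolated, the remainder is a one-line integration. I do not anticipate any real obstacle beyond keeping track of the constants arising from the dilation factor $2$ in the substitution.
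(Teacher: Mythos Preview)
Your proposal is correct and follows essentially the same route as the paper: both arguments use the inclusion $B(x,u)\subset B(y,2u)$ for $y\in B(x,s)$, $u\ge s$, to bound $\wa\sigma(y)$ below by a constant times the tail $F(s)=\int_s^\infty(\sigma(B(x,u))/u^{n-\alpha p})^{1/(p-1)}\,du/u$, substitute this into the outer Wolff potential, and then integrate by parts in $s$. The only cosmetic difference is that the paper carries out the substitution $t\mapsto 2u$ before invoking the ball inclusion, whereas you do it after; the resulting constant $C$ is the same.
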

\begin{proof}
For $t>0$, obviously, 
$$\wa\sigma(y)= \int_0^t\left( \frac{\sigma(B(y,s))}{s^{n-\alpha p}}\right)^{\frac{1}{p-1}}\frac{ds}{s} + \int_t^{\infty}\left( \frac{\sigma(B(y,s))}{s^{n-\alpha p}}\right)^{\frac{1}{p-1}}\frac{ds}{s}$$ 
For $y \in B(x,t)$, we have
$$\int_t^{\infty}\left( \frac{\sigma(B(y,s))}{s^{n-\alpha p}}\right)^{\frac{1}{p-1}}\frac{ds}{s}
=\int_{t/2}^{\infty}\left( \frac{\sigma(B(y,2r))}{(2r)^{n-\alpha p}}\right)^{\frac{1}{p-1}}\frac{dr}{r}$$
$$=\left(\frac{1}{2}\right)^{\frac{n-\alpha p}{p-1}}\int_{t/2}^{\infty}\left( \frac{\sigma(B(y,2s))}{s^{n-\alpha p}}\right)^{\frac{1}{p-1}}\frac{ds}{s} \ge C_{n,p,\alpha}\int_t^{\infty}\left( \frac{\sigma(B(y,2s))}{s^{n-\alpha p}}\right)^{\frac{1}{p-1}}\frac{ds}{s},$$ 
where $C_{n,p,\alpha}=\left(\frac{1}{2}\right)^{\frac{n-\alpha p}{p-1}}.$ 
Since $s \ge t$ and $ y \in B(x,t)$, then $B(y,2s) \supset B(x,s)$, which implies 
\begin{equation} \label{ballest}
\wa\sigma(y) \ge C_{n,p,\alpha}\int_t^{\infty}\left( \frac{\sigma(B(x,s))}{s^{n-\alpha p}}\right)^{\frac{1}{p-1}}\frac{ds}{s}.  
\end{equation} 
Notice that  $$\wa((\wa\sigma)^{r}d\sigma)(x)= \int_0^{\infty}\left( \frac{\int_{B(x,t)}[\wa\sigma(y)]^rd\sigma(y)}{t^{n-\alpha p}}\right)^{\frac{1}{p-1}}\frac{dt}{t}.
$$
By \eqref{ballest}, we obtain 
$$\wa((\wa\sigma)^{r}d\sigma)(x)\ge $$
$$\ge \int_0^{\infty}\left( \frac{\int_{B(x,t)}\Bigl[C_{n,p,\alpha}\int_t^{\infty}\left( \frac{\sigma(B(x,s))}{s^{n-\alpha p}}\right)^{\frac{1}{p-1}}\frac{ds}{s}\Bigr]^rd\sigma(y)}{t^{n-\alpha p}}\right)^{\frac{1}{p-1}}\frac{dt}{t} 
$$
$$\ge C_{n,p,\alpha}^{\frac{r}{p-1}} \int_0^{\infty} \Bigl[\int_t^{\infty}\left( \frac{\sigma(B(x,s))}{s^{n-\alpha p}}\right)^{\frac{1}{p-1}}\frac{ds}{s}\Bigr]^{\frac{r}{p-1}}\left(\frac{\sigma(B(x,t))}{t^{n-\alpha p}}\right)^{\frac{1}{p-1}}\frac{dt}{t}.
$$
Integrating by parts, we deduce  
$$\wa((\wa\sigma)^{r}d\sigma)(x)\ge  \frac{C_{n,p,\alpha}^{\frac{r}{p-1}}}{\frac{r}{p-1}+1}\left(\int_0^{\infty}\left( \frac{\sigma(B(x,s))}{s^{n-\alpha p}}\right)^{\frac{1}{p-1}}\frac{ds}{s}\right)^{\frac{r}{p-1}+1}.$$
Thus, 
$$
\wa((\wa\sigma)^{r} d\sigma)(x)\ge  C_{n,p,\alpha,r}\left(\w\sigma(x)\right)^{\frac{r}{p-1}+1}.
$$
\end{proof}

Setting $r = \frac{q(p-1)}{p-1-q}$ in Lemma \ref{Wolfflemma}, we deduce
\begin{equation} \label{wolffestimate}
\wa((\wa\sigma)^{\frac{q(p-1)}{p-1-q}}d\sigma)(x)\ge  \kappa \, \left(\wa\sigma(x)\right)^{\frac{p-1}{p-1-q}},
\end{equation}
where $\kappa$ depends only on $p$, $q$, and $n$. 

 Let us define a nonlinear integral operator $T$ by 
\begin{equation} \label{T}
T(f)(x) = \Big (\wa(f d\sigma)\Big)^{p-1}(x), \quad x \in \R^n.
\end{equation}
\begin{lemma}\label{Top} Let $1<p<\infty$, $0<\alpha<n$, and $0<q<p-1$. 
Suppose 
\begin{equation}\label{T-bound}
\int_{\R^n}\left(\wa\sigma\right)^{\frac{(1+q)(p-1)}{p-1-q}}d\sigma < \infty. 
\end{equation}  
Then $T$ is a bounded operator from 
$L^{\frac{1+q}{q}}(\R^n, d\sigma)$ to $L^{\frac{1+q}{p-1}}(\R^n, d\sigma).$
\end{lemma}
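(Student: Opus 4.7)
The plan is to reduce the operator bound to a pointwise estimate controlling $\wa(f\,d\sigma)$ by a Hardy--Littlewood maximal function of $f$ and by $\wa\sigma$ itself, after which the inequality follows by H\"older and hypothesis \eqref{T-bound}.

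First I would establish the pointwise bound
$$
\wa(f\, d\sigma)(x) \le \bigl[M_\sigma f(x)\bigr]^{\frac{1}{p-1}}\,\wa\sigma(x), \quad x\in\R^n,
$$
where $M_\sigma f(x) = \sup_{t > 0} \sigma(B(x,t))^{-1}\int_{B(x,t)}|f|\,d\sigma$ is the centered maximal function with respect to $\sigma$. This is immediate: substitute the trivial bound $\int_{B(x,t)} f\,d\sigma \le M_\sigma f(x)\cdot \sigma(B(x,t))$ into the definition of $\wa(f\,d\sigma)(x)$ and pull the constant $[M_\sigma f(x)]^{1/(p-1)}$ out of the radial integral.

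Next, raising this inequality to the power $1+q$, integrating against $d\sigma$, and applying H\"older's inequality with conjugate exponents $s = \tfrac{p-1}{p-1-q}$ and $s' = \tfrac{p-1}{q}$ gives
$$
\int [\wa(f\,d\sigma)]^{1+q}\,d\sigma \le \left(\int (\wa\sigma)^{\frac{(1+q)(p-1)}{p-1-q}}\,d\sigma\right)^{\!\frac{p-1-q}{p-1}} \left(\int [M_\sigma f]^{\frac{1+q}{q}}\,d\sigma\right)^{\!\frac{q}{p-1}}.
$$
The two exponents are forced: $(1+q)s$ has to match the exponent in \eqref{T-bound}, and $\tfrac{(1+q)s'}{p-1}$ has to match the target integrability $\tfrac{1+q}{q}$ of $f$. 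Both conditions are simultaneously satisfied only by the stated $s, s'$, which is precisely where the sub-natural hypothesis $q < p-1$ enters (ensuring $s, s' > 1$).

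The first factor on the right is finite by the hypothesis \eqref{T-bound}, and the second is bounded by a constant multiple of $\|f\|_{L^{(1+q)/q}(d\sigma)}^{(1+q)/(p-1)}$ via the $L^r(d\sigma)$-boundedness of the Hardy--Littlewood maximal operator $M_\sigma$ for the exponent $r = \tfrac{1+q}{q} > 1$, which holds for an arbitrary positive Borel measure on $\R^n$ by a Besicovitch-type covering argument (no doubling assumption on $\sigma$ is required). Taking $(p-1)/(1+q)$-th roots then yields $\|T(f)\|_{L^{(1+q)/(p-1)}(d\sigma)} \le C\,\|f\|_{L^{(1+q)/q}(d\sigma)}$. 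The crux of the argument is the pointwise estimate in the first display, which decouples the $f$- and $\sigma$-dependence so that the H\"older splitting becomes available; I anticipate no serious obstacle since the exponents line up exactly with the hypothesis and the maximal function bound is classical.
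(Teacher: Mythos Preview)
Your proposal is correct and follows essentially the same argument as the paper: the paper proves the identical pointwise bound $\wa(f\,d\sigma)(x)\le [M_\sigma f(x)]^{1/(p-1)}\,\wa\sigma(x)$, then applies H\"older with the same pair of exponents $\frac{p-1}{q}$ and $\frac{p-1}{p-1-q}$, and concludes via the $L^{(1+q)/q}(d\sigma)$-boundedness of $M_\sigma$.
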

\begin{proof}
Clearly, 
\begin{equation*} \label{wolnorm}
||(\wa(fd\sigma))^{p-1}||_{L^{\frac{1 +q}{p-1}}(d\sigma)}=\left(\int_{\R^n}\Big(\wa(fd\sigma)\Big)^{1+q}d\sigma \right)^{\frac{p-1}{1+q}}. 
\end{equation*}
We have  $$\wa(fd\sigma)(x) \le \int_0^{\infty} \left( \frac{\sigma(B(x,r))}{r^{n-\alpha p}}\right)^{p'-1}M_{\sigma}f(x)^{p'-1}\frac{dr}{r}= M_{\sigma}f(x)^{p'-1}\w\sigma(x),$$
where the centered maximal operator  $M_{\sigma}$ is defined by 
$$M_{\sigma}f(x) =\sup_{r>0} \frac{1}{\sigma(B(x,r))} \int_{B(x,r)}|f| \, d\sigma, \quad x \in \R^n.$$
It is well known  that $M_\sigma: L^s (\R^n, d \sigma) \to L^s (\R^n, d \sigma)$ is a bounded operator for all $s>1$. Let $s=\frac{1+q}{q}$. 
Then, using H\"older's inequality with the exponents $\beta= \frac{p-1}{q} > 1$ and $\beta '= \frac{p-1}{p-1-q}$, we estimate, 
$$\int_{\R^n}\Big(\wa(fd\sigma)\Big)^{1+q}d\sigma \le \int_{\R^n}(M_{\sigma}f)^{\frac{1+q}{p-1}}(\wa\sigma)^{1+q} d\sigma$$
$$\le  \left(\int_{\R^n}(M_{\sigma}f)^{\frac{1+q}{q}}d\sigma\right)^{\frac{q}{p-1}}\left(\int_{\R^n}(\wa\sigma)^{\frac{(1+q)(p-1)}{p-1-q}}d\sigma\right)^{\frac{p-1-q}{p-1}}$$
$$\le C\left(\int_{\R^n}f^{\frac{1+q}{q}}d\sigma\right)^{\frac{q}{p-1}}\left(\int_{\R^n}(\wa\sigma)^{\frac{(1+q)(p-1)}{p-1-q}}d\sigma\right)^{\frac{p-1-q}{p-1}}.$$
Thus, 
$$ ||\wa(fd\sigma)^{p-1}||_{L^{\frac{1+q}{p-1}}(d\sigma)}\le c \, ||f||_{L^{\frac{1+q}{q}}(d\sigma)}.$$
\end{proof}
\begin{rmk}
It is not difficult to see that actually  \eqref{T-bound} is also necessary  for the boundedness of the operator $T\!: L^{\frac{1+q}{q}}(\R^n, d\sigma) \to L^{\frac{1+q}{p-1}}(\R^n, d\sigma)$  (see, for example, \cite{COV}).
\end{rmk}

\begin{theorem}\label{integralsol}
Let $1<p<n$, and $0<q<p-1$. Suppose that condition \eqref{wolffintegral} holds. Then there exists  a solution 
$u  \in L^{1+q}(\R^n, d\sigma)$ to the integral equation \eqref{integraleq}. 
\end{theorem}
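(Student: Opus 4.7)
The plan is to construct a solution by monotone iteration, with the initial iterate chosen via Lemma~\ref{Wolfflemma}. Set
$$
u_0 := c \, (\w\sigma)^{\frac{p-1}{p-1-q}}
$$
for a constant $c>0$ to be fixed, and define recursively $u_{j+1} := \w(u_j^q \, d\sigma)$ for $j \ge 0$. Specializing Lemma~\ref{Wolfflemma} to $\alpha = 1$ and $r = q(p-1)/(p-1-q)$ (i.e., estimate \eqref{wolffestimate}) gives
$$
\w(u_0^q \, d\sigma) = c^{\frac{q}{p-1}} \, \w\!\left((\w\sigma)^{\frac{q(p-1)}{p-1-q}} \, d\sigma\right) \ge c^{\frac{q}{p-1}} \, \kappa \, (\w\sigma)^{\frac{p-1}{p-1-q}}.
$$
Choosing $c \le \kappa^{(p-1)/(p-1-q)}$ forces $u_0 \le u_1$ pointwise. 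Since $t \mapsto t^q$ is increasing for $q > 0$ and $f \mapsto \w(f \, d\sigma)$ is monotone in the underlying measure, induction yields $u_j \le u_{j+1}$ for every $j$, so $u(x) := \lim_{j \to \infty} u_j(x) \in [0, +\infty]$ exists pointwise.

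To control the size of the iterates, apply Lemma~\ref{Top} with $\alpha = 1$ to $f = u_j^q$; since $T(u_j^q) = u_{j+1}^{p-1}$,
$$
\|u_{j+1}\|_{L^{1+q}(d\sigma)}^{p-1} = \|T(u_j^q)\|_{L^{(1+q)/(p-1)}(d\sigma)} \le C \, \|u_j^q\|_{L^{(1+q)/q}(d\sigma)} = C \, \|u_j\|_{L^{1+q}(d\sigma)}^{q}.
$$
Setting $a_j := \|u_j\|_{L^{1+q}(d\sigma)}$, this reads $a_{j+1} \le C^{1/(p-1)} a_j^{q/(p-1)}$. Because $q/(p-1) < 1$, the map $x \mapsto C^{1/(p-1)} x^{q/(p-1)}$ has a unique positive fixed point $x^\ast = C^{1/(p-1-q)}$, and a direct check gives $a_j \le M := \max\{a_0, x^\ast\}$ for all $j$. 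The initial value
$$
a_0 = c \, \Bigl(\int_{\R^n} (\w\sigma)^{\frac{(1+q)(p-1)}{p-1-q}} \, d\sigma\Bigr)^{\frac{1}{1+q}}
$$
is finite by hypothesis \eqref{wolffintegral}, so $\sup_j a_j \le M < \infty$.

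Fatou's lemma, applied to the monotone sequence $u_j \uparrow u$, then gives $\|u\|_{L^{1+q}(d\sigma)} \le M$, so $u \in L^{1+q}(\R^n, d\sigma)$ and $u < \infty$ $d\sigma$-a.e. Monotone convergence applied inside the Wolff integral
$$
\w(u_j^q \, d\sigma)(x) = \int_0^\infty \left(\frac{\int_{B(x,t)} u_j^q \, d\sigma}{t^{n-p}}\right)^{\!\frac{1}{p-1}} \frac{dt}{t}
$$
yields $u_{j+1}(x) \to \w(u^q \, d\sigma)(x)$ pointwise, which together with $u_{j+1} \uparrow u$ gives $u = \w(u^q \, d\sigma)$ everywhere, hence $d\sigma$-a.e.

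I expect the main obstacle to be the simultaneous fulfillment of the two demands placed on $u_0$: it must be a sub-iterate under $f \mapsto \w(f^q \, d\sigma)$ (monotonicity of the iteration), and it must already lie in $L^{1+q}(d\sigma)$ (so that the $L^{1+q}$ a priori bound has a finite starting point). Lemma~\ref{Wolfflemma} handles the first, while the integrability assumption \eqref{wolffintegral} handles the second, but both depend on starting from the sharp ansatz $u_0 \propto (\w\sigma)^{(p-1)/(p-1-q)}$ suggested by the lower bound in Theorem~\ref{thmlowest}. Everything downstream is then essentially routine.
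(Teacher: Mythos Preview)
Your proof is correct and follows essentially the same route as the paper: the same initial iterate $u_0=c(\w\sigma)^{(p-1)/(p-1-q)}$, the same monotone iteration $u_{j+1}=\w(u_j^q\,d\sigma)$, and the same use of Lemma~\ref{Top} to control $\|u_j\|_{L^{1+q}(d\sigma)}$. The only cosmetic difference is that the paper bounds $a_{j+1}$ by replacing $a_j$ with $a_{j+1}$ on the right (via $u_j\le u_{j+1}$) to get the uniform estimate in one step, whereas you iterate the recursion $a_{j+1}\le C^{1/(p-1)}a_j^{q/(p-1)}$ directly; both arguments are valid.
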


\begin{proof}
By Lemma \ref{Top}, we have, for all $f \in L^{\frac{1+q}{q}}(\R^n,d\sigma$),
\begin{equation} \label{Wolffbdd}
\int_{\R^n}  \Big(\w(fd\sigma)\Big)^{1+q}\, d\sigma \le C \left( \int_{\R^n}f^{\frac{1+q}{q}} \, d\sigma\right)^{\frac{q}{p-1}}.  
\end{equation}
Let $u_0=c_0 \left(\w \sigma \right)^{\frac{p-1}{p-1-q}}$, where $c_0 > 0$ is a small constant to be chosen later on. We construct 
a sequence of iterations $u_j$ as follows: 
\begin{equation} \label{iterations}
u_{j+1}=\w(u^q_jd\sigma), \quad j=0,1,2, \ldots .
\end{equation}
Applying Lemma \ref{Wolfflemma}, we have 
$$u_1=\w(u_0^qd\sigma)=c_0^{\frac{q}{p-1}} \,  \w(\left(\w\sigma\right)^{\frac{q(p-1)}{p-1-q}}d\sigma) \ge 
c_0^{\frac{q}{p-1}} \,  \kappa \,  \left(\w\sigma\right)^{\frac{p-1}{p-1-q}},$$ where $\kappa$ is the constant in \eqref{Wolffest}. Choosing $c_0$ so that $c_0^{\frac{q}{p-1}} \,  \kappa \ge c_0$, we obtain $u_1 \ge u_0.$ By induction, we can show that the sequence $\{u_j\}$ is nondecreasing. Note that $u_0 \in L^{1+q}(\R^n, d\sigma)$ by assumption. Suppose that $u_0, \ldots, u_j \in L^{1+q}(\R^n,d\sigma)$. Then
$$\int_{\R^n}u_{j+1}^{1+q} \, d\sigma= \int_{\R^n}(\w(u_j^q \, d\sigma)^{1+q}d\sigma.$$
Applying \eqref{Wolffbdd} with $f=u_j^q$, we obtain by induction, 
\begin{equation}\label{ujintegral}
\int_{\R^n}u_{j+1}^{1+q}\, d\sigma \le C \left(\int_{\R^n}u_j^{1+q} \, d\sigma \right)^{\frac{q}{p-1}} < \infty. 
\end{equation}
Since $u_j \le u_{j+1}$, the preceding inequality yields 
$$\int_{\R^n}u_{j+1}^{1+q}\, d\sigma \le C \left(\int_{\R^n}u_{j+1}^{1+q} \, d\sigma \right)^{\frac{q}{p-1}} < \infty.$$
Thus, 
$$\left(\int_{\R^n}u_{j+1}^{1+q}d\sigma \right)^{\frac{p-1-q}{p-1}} \le  C < \infty.$$
Using the Monotone Covergence Theorem and passing to the limit as $j \to\infty$ in \eqref{iterations}, we see that there exists $u= \lim_{j \to \infty} u_j$, such that $u \in L^{1+q}(\R^n,d\sigma)$, and the integral equation \eqref{integraleq} holds. 
\end{proof}

\begin{lemma} \label{duallemma} Let $u \in L^{1+q}(\R^n, d\sigma)$ be a nonnegative supersolution to the integral equation \eqref{integraleq}. 
 Then 
\begin{equation} \label{dual}
u^q \, d\sigma \in L^{-1, p'}(\R^n). 
\end{equation}
\end{lemma}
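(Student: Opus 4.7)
The plan is to reduce the claim to a Wolff energy estimate via the supersolution property, and then to the standard duality between the Riesz potential and the dual Sobolev norm. Write $d\nu = u^q \, d\sigma$, so that we must show $\nu \in L^{-1,p'}(\R^n)$.

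First, I use the supersolution inequality $\w(u^q \, d\sigma)(x) \le u(x)$ for $\sigma$-a.e.\ $x$, multiply by $u^q$, and integrate against $\sigma$ to obtain
\begin{equation*}
\int_{\R^n} \w\nu \, d\nu = \int_{\R^n} \w(u^q d\sigma)\, u^q \, d\sigma \le \int_{\R^n} u^{1+q} \, d\sigma < \infty,
\end{equation*}
where the last integral is finite by hypothesis. Next, Wolff's inequality (Theorem \ref{thmwolffineq}) with $\alpha = 1$ (applicable since $1<p<n$, so $0<1<n/p$) gives
\begin{equation*}
\int_{\R^n} (\re \nu)^{p'} \, dx \le C \int_{\R^n} \w\nu \, d\nu < \infty,
\end{equation*}
i.e., $\re \nu \in L^{p'}(\R^n)$.

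The final step is to convert the $L^{p'}$-bound on $\re \nu$ into a bound on $\|\nu\|_{-1,p'}$. For any $\vp \in \Cnot$, the standard pointwise estimate $|\vp(x)| \le c_n \, \re(|\nabla \vp|)(x)$ (obtained from the Newtonian representation of $\vp$ via the gradient) yields, by Fubini (using the symmetry of the Riesz kernel) and H\"older's inequality,
\begin{equation*}
\left| \int_{\R^n} \vp \, d\nu \right| \le c_n \int_{\R^n} \re(|\nabla \vp|) \, d\nu = c_n \int_{\R^n} |\nabla \vp| \, \re \nu \, dx \le c_n \, \|\nabla \vp\|_{L^p} \, \|\re \nu\|_{L^{p'}}.
\end{equation*}
Taking the supremum over $\vp \in \Cnot$ with $\|\vp\|_{1,p} \le 1$ and extending by density of $\Cnot$ in $L^{1,p}_0(\R^n)$, we conclude $\|\nu\|_{-1,p'} \le c_n \|\re \nu\|_{L^{p'}} < \infty$, so $u^q \, d\sigma \in L^{-1,p'}(\R^n)$.

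There is no real obstacle here beyond assembling these three ingredients in order; the argument is essentially a one-line chain once the supersolution inequality is used to control the Wolff energy, since the passage from Wolff energy to Riesz energy to the dual Sobolev norm is entirely standard. The only point worth a moment's care is the Fubini step, but the Riesz kernel is a positive symmetric kernel so this is unproblematic.
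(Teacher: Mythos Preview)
Your proof is correct and follows essentially the same route as the paper's: both reduce membership in $L^{-1,p'}$ to the finiteness of the Wolff energy $\int \w\nu\, d\nu$ via Wolff's inequality and the Riesz-potential/dual-Sobolev correspondence, and then bound the Wolff energy by $\int u^{1+q}\,d\sigma$ using the supersolution inequality. The only cosmetic difference is that the paper phrases the Riesz step as a chain of equivalences (Sobolev trace $\Leftrightarrow$ $\re$-trace $\Leftrightarrow$ $\re\nu\in L^{p'}$), whereas you prove the needed direction directly with the sub-representation $|\vp|\le c_n\,\re(|\nabla\vp|)$, Fubini, and H\"older.
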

\begin{proof}
Let $d\nu =u^q \, d\sigma$. We need to show that, for all $\vp \in \Cnot$, 
\begin{equation} \label{sobolev}
\left | \int_{\R^n}\vp \, d\nu\right | \le c \left(\int_{\R^n} |\nabla \vp|^p \, dx\right)^{\frac{1}{p}}.
\end{equation}
It is easy to see that the above inequality is equivalent to  
\begin{equation} \label{traceine}
\left |\int_{\R^n}\re g \, \, d \nu \right| \le c \,  \left(\int_{\R^n} |g|^p \, dx \right)^{\frac{1}{p}}, 
\end{equation}
 for all $g \in L^p(\R^n)$, where $\re g$ is the Riesz potential of g of order $1$. 
By duality, \eqref{traceine} is equivalent to 
\begin{equation} \label{finiteenergy}
\int_{\R^n}(\re  \nu )^{p'} \, dx < \infty.
\end{equation}
Using Wolff's inequality \eqref{wolffinequality}, we deduce that \eqref{finiteenergy} holds if and only if 
\begin{equation} \label{wolffcond}
\int_{\R^n}\w\nu \, d\nu  < \infty.
\end{equation}
Notice that since $u\ge \w(u^q \, d \sigma)$ and $u \in L^{1+q}(\R^n, d \sigma)$ then 
$$\int_{\R^n}\w\nu \, d \nu = \int_{\R^n}\w(u^q \, d \sigma) \, u^q \, d \sigma \le  \int_{\R^n}u^{1+q} \, d \sigma < \infty.$$
Thus,  \eqref{finiteenergy} holds. This completes the proof of the lemma.
\end{proof}

We will need a weak comparison principle which goes back to 
P. Tolksdorf's work on quasilinear equations  (see, e.g.,  \cite{PV1}, Lemma 6.9, in the case of renormalized solutions in bounded domains). 

\begin{lemma}  \label{wcp} 
Suppose $\mu, \omega \in M^+(\R^n)\cap L^{-1, p'}(\R^n)$.
 Suppose $u$ and $v$ are   (quasicontinuous) solutions 
 in $L_0^{1, p}(\R^n)$ of the equations $-\Delta_p u = \mu$ and $-\Delta_p v= \omega$, respectively.  If $\mu \le \omega$,  then $u \le v $ q.e.  
\end{lemma}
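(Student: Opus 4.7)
\medskip

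\noindent\textbf{Proof plan.} The plan is to test the two equations against the truncation $(u-v)_+$ and use the monotonicity of the $p$-Laplace operator in its vector form. Since $u,v\in L_0^{1,p}(\R^n)$, the function $w=(u-v)_+$ also lies in $L_0^{1,p}(\R^n)$ (truncation and subtraction preserve the space), with
\[
\nabla w=(\nabla u-\nabla v)\,\chi_{\{u>v\}}\quad\text{a.e.}
\]
First I would extend the weak formulation of both equations from $\varphi\in C_0^\infty(\R^n)$ to $\varphi\in L_0^{1,p}(\R^n)$. For the left‑hand side this follows by H\"older's inequality and the density of $C_0^\infty$ in $L_0^{1,p}(\R^n)$. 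For the right‑hand side, since $\mu,\omega\in L^{-1,p'}(\R^n)\cap M^+(\R^n)$, Theorem~\ref{brezisbrowder} applies to the (quasicontinuous representatives of the) functions $u,v,w$ and gives
\[
\langle\mu,w\rangle=\int_{\R^n}w\,d\mu,\qquad
\langle\omega,w\rangle=\int_{\R^n}w\,d\omega,
\]
with both integrals finite.

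\medskip

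Next I would test the difference of the equations against $w=(u-v)_+\ge 0$. This yields
\[
\int_{\R^n}\bigl(|\nabla u|^{p-2}\nabla u-|\nabla v|^{p-2}\nabla v\bigr)\cdot\nabla w\,dx
=\int_{\R^n}w\,d(\mu-\omega)\le 0,
\]
because $\mu\le\omega$ and $w\ge 0$. Since $\nabla w$ is supported on $\{u>v\}$ and equals $\nabla u-\nabla v$ there, the left‑hand side equals
\[
\int_{\{u>v\}}\bigl(|\nabla u|^{p-2}\nabla u-|\nabla v|^{p-2}\nabla v\bigr)\cdot(\nabla u-\nabla v)\,dx.
\]
By the standard monotonicity inequality for the vector field $\xi\mapsto|\xi|^{p-2}\xi$ (valid for all $1<p<\infty$), the integrand is pointwise nonnegative, and vanishes only where $\nabla u=\nabla v$. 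Combining the two inequalities forces the integrand to vanish a.e., so $\nabla w=0$ a.e.\ in $\R^n$.

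\medskip

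The last step is to conclude $w\equiv 0$. Since $w\in L_0^{1,p}(\R^n)$ and $1<p<n$, the identification recalled in Section~\ref{preli} gives $w\in L^{np/(n-p)}(\R^n)$ with $\|w\|_{L^{np/(n-p)}}\lesssim\|\nabla w\|_{L^p}=0$. Hence $w=0$ a.e., and because a quasicontinuous function that vanishes a.e.\ vanishes quasieverywhere, we obtain $(u-v)_+=0$ q.e., i.e.\ $u\le v$ q.e.

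\medskip

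I expect the main technical point to be justifying the use of $(u-v)_+$ as a test function: namely the identity $\langle\mu,w\rangle=\int w\,d\mu$ for the quasicontinuous representative, which requires Theorem~\ref{brezisbrowder} together with the fact that $w$ inherits a quasicontinuous representative from those of $u$ and $v$. The monotonicity inequality for $|\xi|^{p-2}\xi$ is classical (with different proofs for $p\ge 2$ and $1<p<2$), and the passage from $\nabla w\equiv 0$ to $w\equiv 0$ is immediate via the Sobolev embedding on $L_0^{1,p}(\R^n)$.
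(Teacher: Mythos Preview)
Your proof is correct and follows essentially the same route as the paper: extend the weak formulation to test functions in $L_0^{1,p}(\R^n)$ via Theorem~\ref{brezisbrowder}, test the difference against $(u-v)_+$, and use the monotonicity of $\xi\mapsto|\xi|^{p-2}\xi$ to force $\nabla(u-v)_+=0$ a.e. The only cosmetic difference is the endgame: the paper concludes $u\le v$ q.e.\ directly from the capacity estimate $\text{cap}_p\{u-v>a\}\le a^{-p}\int_A|\nabla(u-v)|^p\,dx=0$ (Lemma~2.22 in \cite{MZ}), whereas you pass through the Sobolev embedding $\|w\|_{L^{np/(n-p)}}\lesssim\|\nabla w\|_{L^p}$ and then invoke quasicontinuity---both are equally valid in the ambient setting $1<p<n$.
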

\begin{proof}
For every $\varphi \in L_0^{1, p}(\R^n)$, we have by Theorem~\ref{brezisbrowder}, 
\begin{equation} \label{umu}
\int_{\R^n} |\nabla u|^{p-2} \nabla u \cdot \nabla \varphi \,dx = \langle \mu, \varphi\rangle=\int_{\R^n} \varphi \, d \mu, 
\end{equation}
\begin{equation} \label{vomega}
\int_{\R^n} |\nabla v|^{p-2} \nabla v \cdot \nabla \varphi \,dx= \langle \omega, \varphi\rangle= \int_{\R^n} \varphi \, d \omega.
\end{equation}
Hence,
\begin{equation} \label{subtract}
\int_{\R^n} (|\nabla u|^{p-2} \nabla u - |\nabla v|^{p-2} \nabla v )\cdot \nabla \varphi \,dx = \int_{\R^n} \varphi \, d \mu - \int_{\R^n} \varphi \, d \omega.
\end{equation}
Since $\mu \le \omega$, it follows that, for every $\varphi \in L_0^{1, p}(\R^n)$, $\varphi \ge 0$, we have 
\begin{equation}\label{inequality}
\int_{\R^n} (|\nabla u|^{p-2} \nabla u - |\nabla v|^{p-2} \nabla v )\cdot \nabla \varphi \,dx \le 0.
\end{equation}

Testing \eqref{inequality} with $ \varphi = (u-v)^{+}=\max\{u-v,0\} \in L_0^{1, p}(\R^n)$, we obtain,
$$I=\int_{\R^n} (|\nabla u|^{p-2} \nabla u - |\nabla v|^{p-2} \nabla v )\cdot \nabla (u-v)^{+} \,dx \le 0.$$
Let $A=\{x \in \R^n : u(x) > v(x) \}$, then 

$$I=\int_{A} (|\nabla u|^{p-2} \nabla u - |\nabla v|^{p-2} \nabla v )\cdot \nabla (u-v) \,dx \le 0.$$

Note that $$(|\nabla u|^{p-2} \nabla u - |\nabla v|^{p-2} \nabla v )\cdot \nabla (u-v) \ge   0.$$
Thus,
$$0\le \int_{A} (|\nabla u|^{p-2} \nabla u - |\nabla v|^{p-2} \nabla v )\cdot \nabla (u-v) \,dx  
= \int_{A}  \varphi (d \mu -d \omega)\le 0.$$
It follows that  $\nabla (u-v) =0$ a.e. 
on $A$. By Lemma 2.22 in \cite{MZ}, for every $a>0$, 
$$
\text{cap}_p \, \{u-v>a\} \le \frac{1}{a^p} \int_{A} |\nabla (u-v)|^p \, dx  =0. 
$$
Consequently, $\text{cap}_p(A)=0$, i.e., $u \le v$ q.e.
\end{proof}

We are now in a position to prove the main theorem of this section. 

\begin{theorem} \label{energysol} Let $1<p< n$ and $0<q<p-1$. Let $\sigma \in M^+(\R^n)$, $\sigma \not=0$. 
Suppose that \eqref{wolffintegral} holds.  
Then there exists a nontrivial solution $w\in L_0^{1, p}(\R^n)\cap L^q_{\rm loc}(\R^n, d \sigma)$ to \eqref{eq1}.  Moreover, $w$ is a minimal solution, i.e., $w \le u$ $d \sigma$-a.e.  (q.e. for quasicontinuous representatives) 
for any nontrivial solution $u\in L_0^{1, p}(\R^n)\cap L^q_{\rm loc}(\R^n, d \sigma)$ to \eqref{eq1}. \end{theorem}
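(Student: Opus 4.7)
The plan is to construct $w$ as the pointwise limit of an increasing iteration $\{v_j\} \subset L_0^{1,p}(\R^n)$, where $v_0 = c_0 (\w\sigma)^{(p-1)/(p-1-q)}$ for a small constant $c_0 > 0$ to be calibrated, and $v_{j+1}$ is the unique nonnegative $p$-superharmonic $L_0^{1,p}$ solution to $-\Delta_p v_{j+1} = v_j^q\, d\sigma$ with $\inf_{\R^n} v_{j+1} = 0$, obtained as the minimizer of the strictly convex functional $\phi \mapsto \frac{1}{p}\int_{\R^n}|\nabla\phi|^p\,dx - \langle v_j^q d\sigma, \phi\rangle$ over $L_0^{1,p}(\R^n)$. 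As a reference majorant I would first invoke Theorem~\ref{integralsol} to produce an integral-equation solution $u \in L^{1+q}(\R^n,d\sigma)$ with $u = \w(u^q d\sigma)$ and $u \ge c_1(\w\sigma)^{(p-1)/(p-1-q)}$; Lemma~\ref{duallemma} then gives $u^q d\sigma \in L^{-1,p'}(\R^n) \cap M^+(\R^n)$.

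Two inductive invariants will carry the argument: monotonicity $v_j \le v_{j+1}$ and the majorization $v_j \le Cu$, with $C = K^{(p-1)/(p-1-q)}$ and $K$ the constant of Theorem~\ref{thmpotest}. For the base case $v_1 \ge v_0$ I would use Lemma~\ref{Wolfflemma} with $r = q(p-1)/(p-1-q)$ together with the lower bound $v_1 \ge K^{-1}\w(v_0^q d\sigma)$ from Theorem~\ref{thmpotest}, yielding $v_1 \ge (\kappa c_0^{q/(p-1)}/K)(\w\sigma)^{(p-1)/(p-1-q)} \ge v_0$ once $c_0$ is taken small. Monotonicity then propagates via the comparison principle Lemma~\ref{wcp}, after noting $v_j^q d\sigma \in L^{-1,p'}$, which follows from $v_j \le Cu$ and the argument of Lemma~\ref{duallemma}. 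The upper bound propagates via the upper Wolff estimate in Theorem~\ref{thmpotest}: $v_{j+1} \le K\w(v_j^q d\sigma) \le K C^{q/(p-1)}\w(u^q d\sigma) = Cu$ by the choice of $C$, with $v_0 \le Cu$ secured by also requiring $c_0 \le C c_1$.

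To pass to the limit I would use \eqref{Rieszpote1} together with $v_j, v_{j+1} \le Cu$ to obtain the uniform energy estimate
\[
\|v_{j+1}\|_{1,p}^p \;=\; \int_{\R^n} v_j^q\, v_{j+1}\, d\sigma \;\le\; C^{1+q}\!\int_{\R^n} u^{1+q}\, d\sigma \;<\; \infty.
\]
Reflexivity of $L_0^{1,p}(\R^n)$ then extracts a subsequence converging weakly to some $w \in L_0^{1,p}(\R^n)$ that coincides a.e.\ with the pointwise monotone limit of $\{v_j\}$. Since $w \le Cu < \infty$ a.e., the limit admits a $p$-superharmonic representative, and Theorem~\ref{weakconv} yields $\mu[v_{j+1}] = v_j^q d\sigma \rightharpoonup \mu[w]$; monotone convergence identifies this weak limit with $w^q d\sigma$, so $-\Delta_p w = w^q d\sigma$ in the sense of Definition~\ref{psupersense}. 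Because $w \in L_0^{1,p}$ and $w^q d\sigma \in L^{-1,p'}$, Theorem~\ref{brezisbrowder} together with the density of $\Cnot$ in $L_0^{1,p}$ upgrades this to the distributional form \eqref{q-distri}, and nontriviality is automatic from $w \ge v_0 > 0$ (since $\sigma \ne 0$ forces $\w\sigma > 0$).

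For minimality, given any competing solution $u' \in L_0^{1,p}\cap L^q_{\text{loc}}(d\sigma)$, the \emph{universal} lower bound of Theorem~\ref{thmlowest} yields $u' \ge C_\ast(\w\sigma)^{(p-1)/(p-1-q)}$ with $C_\ast$ depending only on $p,q,n$. Taking $c_0 \le C_\ast$ once and for all ensures $v_0 \le u'$ for every such competitor; induction via Lemma~\ref{wcp} then propagates $v_j \le u'$ q.e., whence $w \le u'$ q.e., and a fortiori $d\sigma$-a.e.\ since $\sigma$ is absolutely continuous with respect to $p$-capacity. The main obstacle I anticipate is the simultaneous calibration of $c_0$ to achieve all of $v_1 \ge v_0$, $v_j \le Cu$ for all $j$, and $v_0 \le u'$ for every competitor; a subtler technical point is verifying that the monotone pointwise limit of the $v_j$ genuinely defines a $p$-superharmonic function (not identically $+\infty$ on any component) and that the weak continuity of $-\Delta_p$ interfaces correctly with the monotone convergence of $v_j^q d\sigma$ to $w^q d\sigma$.
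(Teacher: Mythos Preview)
Your proposal is correct and follows essentially the same approach as the paper: an increasing iteration $v_{j+1}$ solving $-\Delta_p v_{j+1}=v_j^q\,d\sigma$ in $L_0^{1,p}$, started from $c_0(\w\sigma)^{(p-1)/(p-1-q)}$, dominated above by (a multiple of) the integral-equation solution from Theorem~\ref{integralsol}, with monotonicity via Lemma~\ref{wcp}, passage to the limit via Theorem~\ref{weakconv}, and minimality via Theorem~\ref{thmlowest}. The only cosmetic difference is that the paper rescales the integral-equation solution to $v=K\w(v^q d\sigma)$ so that the majorant is $v$ itself (no extra constant), whereas you keep $u=\w(u^q d\sigma)$ and carry the constant $C=K^{(p-1)/(p-1-q)}$; your $Cu$ is precisely the paper's $v$, and your calibration of $c_0$ matches the paper's condition~\eqref{c0}.
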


\begin{proof} We first show that there exists a solution $w\in L_0^{1, p}(\R^n)\cap L^q_{\rm loc}(\R^n, d \sigma)$ to \eqref{eq1}. Applying Theorem \ref{integralsol}, we conclude that there exists a solution $v \in L^{1+q}(\R^n, d\sigma)$ to the integral equation \eqref{integraleq}. By using a constant multiple $c \, v$ in place of $v,$ we can assume that $v = K\w(v^q \, d\sigma)$, where $K$ is the constant in Theorem \ref{thmpotest}.  Then by  Lemma \ref{duallemma} and Theorem \ref{thmlowest},  
$$
v^q \, d \sigma \in L^{-1, p'}(\R^n), \quad \text{and} \quad v \ge C  \, K^{\frac{p-1}{p-1-q}} \left(\w\sigma\right)^{\frac{p-1}{p-1-q}}, 
$$
where $C$ is the constant in \eqref{lowerest}.

We set $$w_0= c_0 \left(\w\sigma\right)^{\frac{p-1}{p-1-q}}, \quad d\omega_0=w_0^q \, d\sigma, $$ 
where $c_0>0$ is a small constant to be determined later. 
Since $$w_0 \le \frac {c_0}{CK^{\frac{p-1}{p-1-q}}} \, v ,$$ it follows 
that, for $c_0\le C K^{\frac{p-1}{p-1-q}}$, we have $w_0\le v$. Hence,  
$$ w_0 \in L^{1+q} (\R^n, d \sigma), \quad \text{ and} \quad \omega_0  \in  L^{-1, p'}(\R^n) .$$
Then there exists a unique nonnegative solution $w_1 \in L_0^{1,p}(\R^n)$ 
to the equation 
$$-\Delta_p w_1 = \omega_0,  \quad \text{and} \quad  ||w_1||_{1,p}^{p-1} = ||\omega_0||_{-1, p'}.$$ 
(See \eqref{Rieszpote1}.) Moreover, by Theorem~\ref{thmpotest}, 
$$0 \le w_1 \le K \, \w \omega_0 \le K \, \w (v^q d\sigma) =  v.$$
Consequently, by Lemma~\ref{duallemma}, 
$$ w_1 \in L^{1+q} (\R^n, d \sigma),  \quad \text{and} \quad w_1^q \, d \sigma \in  L^{-1, p'}(\R^n).$$
We deduce, using \eqref{wolffestimate}, 
$$w_1 \ge \frac{1}{K}   \w \omega_0=\frac{c_0^{\frac{q}{p-1}}}{K} 
\,  \w \Big ( \left(\w\sigma\right)^{\frac{q(p-1)}{p-1-q}} d \sigma\Big)
$$
$$ \ge \frac{c_0^{\frac{q}{p-1}} \kappa}{K} 
\,  \left(\w\sigma\right)^{\frac{p-1}{p-1-q}} =   \frac{c_0^{\frac{q}{p-1}-1} \kappa}{K} \, w_0.
$$
Hence, for $c_0\le  (K^{-1} \, \kappa)^{\frac{p-1}{p-1-q}}$, we have $v \ge w_1\ge w_0$. 

To prove the minimality of $w$, we will need $c_0 \le C$, so we pick $c_0$ so that  
\begin{equation}\label{c0}
0<c_0\le \min\big \{C \, K^{\frac{p-1}{p-1-q}}, \, (K^{-1} \, \kappa)^{\frac{p-1}{p-1-q}}, \, C \big \}.
\end{equation}

Let us now construct by induction a sequence $\{w_j\}_{j \ge 1}$ so that 
\begin{equation}\label{wj}
\left\{ \begin{array}{ll}
-\Delta_pw_{j}=\sigma \, w_{j-1}^q \, \,  \text{ in ~~}\,  \mathbb{R}^n,  \quad  
w_j \in L_0^{1, p}(\R^n) \cap L^{1+q}(\R^n, d \sigma),
 \\
 0\le w_{j-1} \le w_{j} \le v, \text{ q.e.,}  \quad w_{j-1}^q \, d \sigma \in  L^{-1, p'}(\R^n), 
\end{array} \right.
\end{equation}
where $\sup_j \, ||w_j||_{1,p} < \infty.$ We set $d\omega_j=w_j^q \, d\sigma$, so that
$$-\Delta_p w_j = \omega_{j-1}, \quad j=1,2, \ldots . $$

Suppose that $w_0, w_1, \ldots, w_{j-1}$ have been constructed. As in the case $j=1$, we see that, since $\omega_{j-1} \in  L^{-1, p'}(\R^n)$, 
there exists a unique $w_j \in L_0^{1, p}(\R^n)$ such that $-\Delta_p w_{j}=\omega_{j-1}$, and by \eqref{Rieszpote1}, 
$$ ||w_j||_{1,p}^{p} = ||\omega_{j-1}||^{p'}_{-1, p'} = \int_{\R^n} w_j \, w^q_{j-1} \, d \sigma.$$
By Theorem \ref{thmpotest}, we get
$$w_j \le  K \,  \w \omega_{j-1} = K \, \w(w^q_{j-1}d\sigma).$$
Using the inequality $w_{j-1} \le v$, we see that 
$$w_j \le K \, \w(v^q \, d\sigma) = v.$$
Combining these estimates, we obtain  
$$
 ||w_j||_{1,p}^{p} = \int_{\R^n} w_j \, w^q_{j-1} \, d \sigma \le \int_{\R^n} v^{1+q} \, d \sigma<\infty.
$$

Consequently,  $\{w_j\}$ is a bounded sequence in $L_0^{1,p}(\R^n)$. Notice 
that $w_{j-1} \le w_j$ by the weak comparison principle (Lemma \ref{wcp}), since $\omega_{j-2} \le 
\omega_{j-1}$ , for $j \ge 2$. 

Thus, the sequence \eqref{wj} has been constructed. Letting $w=\lim_{j \to \infty} \, w_j$, and applying the weak continuity of the $p$-Laplace  operator (Theorem~\ref{weakconv}), the Monotone Convergence Theorem, and Lemma 1.33 in \cite{HKM}, we deduce the existence of a nontrivial  solution $w \in L_0^{1,p}(\R^n)$ to  \eqref{eq1}. 

We now prove the minimality of $w$.  Suppose  $u \in L_0^{1, p}(\R^n)\cap L^q_{\rm loc}(\R^n, d \sigma)$  is any nontrivial solution to \eqref{eq1}. 
Letting $d \mu =u^q \, d \sigma$, we have $u \in L^{1+q}(\R^n, d \sigma)$, and $\mu \in   L^{-1, p'}(\R^n)$ by 
Lemma~\ref{neceFES}. To show that $ u \ge w $, notice that by Theorem \ref{thmlowest}, 
$$u \ge C \, (\w\sigma)^{\frac{p-1}{p-1-q}},$$
where $C$ is the constant in \eqref{lowerest}. By the choice of $c_0$ in \eqref{c0}, we have $w_0 \le u$, so that  $\omega_0 \le \mu.$ Therefore, by the weak comparison principle $w_1 \le u $ q.e. Arguing by induction as above, we see that $w_{j-1}\le w_j \le u $ q.e. for $ j \ge 1$. It 
follows that $ \lim_{ j \to \infty} w_j =w \le u$ q.e., which proves that $w$ is a minimal solution.  \end{proof}

By combining Lemma~\ref{neceFES} and Theorem~\ref{energysol} we conclude the proof of the 
existence part of the Theorem stated in the Introduction. In Sec.~\ref{unique} below we will establish   
 the uniqueness part using the existence of a minimal solution constructed in Theorem~\ref{energysol}. 

\section{$\Ac$-Laplace operators}\label{alaplace} 

Let us assume that $\Ac : \R^n \times \R^n \to \R^n$ satisfies the following structural assumptions: 
$$ x \to \Ac(x,\xi) \quad \text{ is measurable for all }\,  \xi \in \R^n, $$
$$ \xi \to \Ac(x,\xi) \quad \text{ is continuous for a.e.  } \, x \in \R^n,$$
and there are constants $0 < \alpha  \le \beta < \infty,$ such that for a.e. $x$ in $\R^n$, and for all $\xi$ in $\R^n$,
$$\Ac(x,\xi)\cdot \xi \ge \alpha|\xi|^p, \quad |\Ac(x,\xi)|\le \beta |\xi|^{p-1},$$
$$(\Ac(x,\xi_1) - \Ac(x,\xi_2))\cdot(\xi_1 - \xi_2) > 0 \quad \text{ if } \,\, \xi_1 \neq \xi_2,$$
$$\Ac(x,\lambda\xi)=\lambda|\lambda|^{p-2}\Ac(x,\xi),  \quad \text{ if } \, \lambda \in \R \backslash\{0\}.$$

 Consider the equation 
\begin{equation}\label{renormeq}
-\text{div}\Ac(x,\nabla u)=\mu  \quad \text{ in } \,\, \Omega,    
\end{equation}
where $\mu \in M^+(\Omega)$, and $\Omega \subseteq \R^n$ is an open set. Let us use the decomposition 
 $\mu=\mu_0 + \mu_s,$ where $\mu_0$ is absolutely continuous with respect to the $p$-capacity and  $\mu_s$ is singular with respect to the $p$-capacity. Let $T_k(s)=\max\{-k,\min\{k,s\}\}$. We say that $u$ is a {\it local renomalized solution } to \eqref{renormeq} if, for all $k>0$,  $T_k(u) \in W_{\text{loc}}^{1,p}(\Omega)$, $u \in L_{\text{loc}}^{(p-1)s}$ for $1\le s<\frac{n}{n-p},$ $Du \in L_{\text{loc}}^{(p-1)r}(\Omega)$ for $1 \le r < \frac{n}{n-1}$, and
$$\int_{\Omega} \langle\Ac(x,Du), Du\rangle \, h'(u) \, \phi \, dx + \int_{\Omega}\langle\Ac(x,Du), \nabla \phi \rangle h(u) \, \phi \, dx $$
$$=\int_{\Omega}h(u) \, \phi \,d\mu_0 +h(+\infty) \, \int_{\Omega} \phi\,d\mu_s,$$
for all $\phi \in C_0^{\infty}(\Omega) $ and $h \in W^{1,\infty}(\R)$ such that $h'$ is compactly supported; here 
$h(+\infty)=\lim_{t \to +\infty}h(t).$

In \cite{KKT}, it is shown that  every $\Ac$-superhamonic function is locally a renormalized solution, and conversely, every local renormalized solution has an $\Ac$-superharmonic representative. Consequently, 
we can work either with local renormalized solutions, or equivalently with potential theoretic solutions, 
or finite energy solutions in the case $u \in L^{1, p}_0(\Omega)$. We note that, for finite energy solutions, 
$Du$ coincides with the distributional gradient $\nabla u$, and $d \mu = u^q \, d \sigma$ is absolutely 
continuous with respect to the $p$-capacity as was mentioned above.

It is known that basic facts of potential theory stated in Sec.~\ref{preli}, including Wolff's potential estimates \cite{KM2},  and the weak continuity 
principle \cite{TW1}, remain true for the  $\Ac$-Laplacian.   From the above results it follows that our methods work, with obvious modifications, not only for the $p$-Laplace operator, but for the general $\Ac$-Laplace operator 
${\rm div} \, \Ac(x,\nabla u)$ as well. In particular, the following more general theorem holds.

\begin{theorem} Under the above assumptions on $\Ac(x,\, \xi)$, together with the conditions of the Theorem stated in Sec.~\ref{intro}, the equation 
$$
-{\rm div} \, \Ac(x,\nabla u) = \sigma \, u^q  
$$
has a solution $u \in L^{1, p}_0(\R^n)\cap L^q_{{\rm loc}} (\R^n, d \sigma)$ if and only if condition \eqref{wolffintegral}  holds. 
\end{theorem}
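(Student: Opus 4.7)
The plan is to transplant the entire argument of Section~\ref{energy} (together with Lemma~\ref{neceFES}) to the $\mathcal{A}$-Laplacian setting, using the fact that every essential input is already known to persist under the structural assumptions on $\mathcal{A}$: the Kilpel\"ainen--Mal\'y Wolff potential bounds \cite{KM2} (i.e.\ Theorem~\ref{thmpotest} with $-\Delta_p$ replaced by $-\mathrm{div}\,\mathcal{A}(x,\nabla\cdot)$), the Trudinger--Wang weak continuity principle \cite{TW1} (Theorem~\ref{weakconv}), the equivalence of $\mathcal{A}$-superharmonic functions and locally renormalized solutions \cite{KKT}, and the lower bound of Theorem~\ref{thmlowest}, which was proved in \cite{CV} for general $\mathcal{A}$. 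I would set up the proof by fixing these analogs and then running the scheme of Theorem~\ref{energysol} verbatim, with the operator replaced.

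For necessity, given a nontrivial supersolution $u\in L_0^{1,p}(\R^n)\cap L^q_{\rm loc}(\R^n,d\sigma)$ to $-\mathrm{div}\,\mathcal{A}(x,\nabla u)=\sigma u^q$, the bound $|\mathcal{A}(x,\xi)|\le \beta |\xi|^{p-1}$ and H\"older's inequality give $|\langle \mathrm{div}\,\mathcal{A}(x,\nabla u),\vp\rangle|\le \beta\,\|\nabla u\|_{L^p}^{p-1}\|\nabla\vp\|_{L^p}$, hence $-\mathrm{div}\,\mathcal{A}(x,\nabla u)\in L^{-1,p'}(\R^n)\cap M^+(\R^n)$. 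The Brezis--Browder identity (Theorem~\ref{brezisbrowder}) then yields $u\in L^{1+q}(\R^n,d\sigma)$, and combining with the $\mathcal{A}$-version of Theorem~\ref{thmlowest} gives $u\ge C(\mathbf{W}_{1,p}\sigma)^{(p-1)/(p-1-q)}$, so \eqref{wolffintegral} follows.

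For sufficiency, I would first invoke Theorem~\ref{integralsol}, which is purely potential-theoretic and involves no differential operator, to produce $v\in L^{1+q}(\R^n,d\sigma)$ satisfying $v=K\,\mathbf{W}_{1,p}(v^q d\sigma)$, and then use Lemma~\ref{duallemma} to conclude $v^q d\sigma\in L^{-1,p'}(\R^n)$. Next I would construct the iteration $w_0=c_0(\mathbf{W}_{1,p}\sigma)^{(p-1)/(p-1-q)}$ with $c_0$ chosen as in \eqref{c0}, and inductively solve
\[
-\mathrm{div}\,\mathcal{A}(x,\nabla w_j)=w_{j-1}^q\,d\sigma \quad\text{in } \R^n,\qquad w_j\in L_0^{1,p}(\R^n),
\]
where existence and uniqueness of each $w_j$ follow from standard monotone-operator theory (Browder--Minty) applied to the strictly monotone, coercive operator $u\mapsto -\mathrm{div}\,\mathcal{A}(x,\nabla u)$ from $L_0^{1,p}(\R^n)$ into its dual. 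The $\mathcal{A}$-Wolff estimate yields $w_j\le K\,\mathbf{W}_{1,p}(w_{j-1}^q d\sigma)$, so by induction $w_j\le v$, and a weak comparison principle for $\mathcal{A}$ (obtained by testing \eqref{inequality} with $(w_{j-1}-w_j)^+\in L_0^{1,p}(\R^n)$ and using strict monotonicity $(\mathcal{A}(x,\xi_1)-\mathcal{A}(x,\xi_2))\cdot(\xi_1-\xi_2)>0$ together with Lemma 2.22 of \cite{MZ}) yields monotonicity $w_{j-1}\le w_j$. Since $\|w_j\|_{1,p}^p=\int w_j w_{j-1}^q d\sigma\le \int v^{1+q}d\sigma<\infty$, the monotone limit $w=\lim_j w_j$ exists in $L_0^{1,p}(\R^n)\cap L^{1+q}(\R^n,d\sigma)$, and passing to the limit by monotone convergence on the right and by the weak continuity principle (applicable since each $w_j$ is $\mathcal{A}$-superharmonic) on the left produces the desired solution.

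The main obstacles are verifying the weak comparison principle and the Brezis--Browder identity for $\mathcal{A}$; both rely only on the structural assumptions (strict monotonicity and the $|\xi|^p$-type two-sided bound) plus the variational identity $\int \mathcal{A}(x,\nabla u)\cdot\nabla\vp\,dx=\langle\mu,\vp\rangle$ for $\vp\in L_0^{1,p}(\R^n)$, so the proofs are word-for-word copies of the $p$-Laplacian versions. Once these are in hand, the minimality of $w$ and the uniqueness follow exactly as in Theorem~\ref{energysol} and Section~\ref{unique}.
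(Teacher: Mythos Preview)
Your proposal is correct and follows exactly the approach the paper intends: the paper's own ``proof'' of this theorem is simply the remark that the Wolff potential estimates \cite{KM2}, the weak continuity principle \cite{TW1}, and the potential-theoretic machinery of Section~\ref{preli} all carry over to the $\mathcal{A}$-Laplacian, so the arguments of Section~\ref{energy} go through with obvious modifications---you have merely spelled those modifications out. One tiny slip: for general $\mathcal{A}$ the identity $\|w_j\|_{1,p}^p=\int w_j w_{j-1}^q\,d\sigma$ becomes the inequality $\alpha\|w_j\|_{1,p}^p\le \int \mathcal{A}(x,\nabla w_j)\cdot\nabla w_j\,dx=\int w_j w_{j-1}^q\,d\sigma$, but this is all that is needed for the uniform bound; also, the Brezis--Browder identity (Theorem~\ref{brezisbrowder}) is a statement about Sobolev functions and measures and does not involve the operator at all, so there is nothing to verify there.
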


\section{Uniqueness}\label{unique}

In this section, we prove the uniqueness of finite energy solutions to \eqref{eq1}. We employ a convexity argument 
using some ideas of Kawohl \cite{Kaw} (see also \cite{BeKa}, \cite{BF}), together with the existence 
of a minimal solution established above.  

\begin{theorem} \label{uniquethm} Let $1<p< \infty$ and let $0<q<p-1$. Let $\sigma \in M^+(\R^n)$. 
Suppose that there exists a nontrivial solution $u \in L_0^{1, p}(\R^n)\cap L^q_{\rm loc}(\R^n, d \sigma)$ to \eqref{eq1}. Then such a  solution is unique.
\end{theorem}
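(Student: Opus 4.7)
I would follow the Picone--Diaz--Saa strategy (which is the concrete incarnation of the Kawohl-type convexity mentioned in the Introduction), combined with the existence of the minimal solution $w$ from Theorem~\ref{energysol}. Let $u \in L_0^{1,p}(\R^n)\cap L^q_{\rm loc}(\R^n,d\sigma)$ be any other nontrivial solution. Since $w$ is minimal, $0\le w\le u$ q.e., so it suffices to prove $u=w$. The lower bound $w\ge C(\w\sigma)^{(p-1)/(p-1-q)}$ from Theorem~\ref{thmlowest} guarantees that $w$ (hence also $u$) is strictly positive $d\sigma$-a.e.\ on $\{\w\sigma>0\}$, which carries the measure $\sigma$.

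The heart of the argument is Picone's inequality. For $\epsilon>0$, set $u_\epsilon=u+\epsilon$ and $w_\epsilon=w+\epsilon$; pointwise, one has
\[
|\nabla u_\epsilon|^p \;\ge\; |\nabla w_\epsilon|^{p-2}\,\nabla w_\epsilon\cdot\nabla\Big(\frac{u_\epsilon^p}{w_\epsilon^{p-1}}\Big).
\]
Multiplying by a smooth cutoff $\chi_R\in C_0^\infty(B_R)$, integrating, using $-\Delta_p w=\sigma w^q$ on the right-hand side with the (regularized) test function $u_\epsilon^p/w_\epsilon^{p-1}$, and then sending first $R\to\infty$ and then $\epsilon\to0$, should yield
\[
\int_{\R^n}|\nabla u|^p\,dx \;\ge\; \int_{\R^n}\frac{u^p}{w^{p-1}}\,d\mu[w] \;=\; \int_{\R^n} u^p\, w^{q-p+1}\,d\sigma.
\]
On the other hand, Lemma~\ref{neceFES} gives the energy identity $\int|\nabla u|^p\,dx=\int u^{1+q}\,d\sigma$. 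Since $u\ge w>0$ $d\sigma$-a.e.\ and $q-p+1<0$, pointwise $u^{1+q}=u^p\cdot u^{q-p+1}\le u^p\cdot w^{q-p+1}$ $d\sigma$-a.e., giving the \emph{reverse} integral inequality. Together the two bounds force equality in both the integrated and the pointwise form, so $u^{q-p+1}=w^{q-p+1}$ $d\sigma$-a.e.\ on $\{u>0\}$, i.e.\ $u=w$ $d\sigma$-a.e.

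Once $u=w$ $d\sigma$-a.e., the measures $u^q\,d\sigma$ and $w^q\,d\sigma$ coincide, so $-\Delta_p u=-\Delta_p w$ as elements of $L^{-1,p'}(\R^n)$, and Lemma~\ref{wcp} applied in both directions delivers $u=w$ q.e. The hard part will be making the Picone step fully rigorous on all of $\R^n$: the function $u_\epsilon^p/w_\epsilon^{p-1}$ does not vanish at infinity (it tends to $\epsilon$), so it is not itself in $L_0^{1,p}(\R^n)$. One must work with the truncations $T_M(u),\,T_M(w)$ and the cutoffs $\chi_R$, verify admissibility of the resulting test functions in the potential-theoretic/renormalized formulation, and pass to the limit $M,R\to\infty$, $\epsilon\to0$ carefully. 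The required control is provided by the finiteness of $\int u^{1+q}\,d\sigma$, the $L^p$-integrability of $\nabla u$ and $\nabla w$, the pairing formula from Theorem~\ref{brezisbrowder}, and the pointwise lower bound on $w$ on $\{\w\sigma>0\}$, which keeps the denominator $w_\epsilon^{p-1}$ bounded away from $0$ exactly where the right-hand side measure lives.
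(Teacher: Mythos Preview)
Your approach is correct and very close in spirit to the paper's, but the execution differs in a way worth noting. Both arguments exploit the same ``hidden convexity'' of the Dirichlet integral in $u^p$; you use its infinitesimal form (Picone's inequality) while the paper uses its finite-$t$ form via the interpolation $\lambda_t=\big((1-t)v^p+tu^p\big)^{1/p}$. In the paper one tests the equation $-\Delta_p v=\sigma v^q$ with $\psi=\lambda_t-\lambda_0=\lambda_t-v$, divides by $t$, and lets $t\downarrow 0$ using Fatou's lemma. This lands on exactly the same integral inequality you obtain,
\[
\int_{\R^n} u^p\,v^{q-p+1}\,d\sigma \;\le\; \int_{\R^n} u^{1+q}\,d\sigma,
\]
which, combined with the pointwise bound $u^{1+q}\le u^p\,v^{q-p+1}$ coming from $u\ge v$ and $q-p+1<0$, forces $u=v$ $d\sigma$-a.e. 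The concluding step (passing from $u=w$ $d\sigma$-a.e.\ to $u=w$ in $L_0^{1,p}$) is the same in both proofs.

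The practical difference is that the paper's choice of test function is automatically admissible: since $u,v\in L_0^{1,p}(\R^n)$ and $v\le\lambda_t\le u$, one has $\lambda_t\in L_0^{1,p}(\R^n)$ and hence $\lambda_t-v\in L_0^{1,p}(\R^n)$ without any regularization, cutoffs, or truncations. Your route requires making $u_\epsilon^p/w_\epsilon^{p-1}$ admissible on all of $\R^n$, which (as you correctly flag) forces the $\chi_R$, $T_M$, $\epsilon$ machinery and a careful triple limit. That can certainly be carried out, but the $\lambda_t$ device is a cleaner packaging of the same convexity and sidesteps those technicalities entirely.
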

\begin{proof}
Suppose $u, v$ are nontrivial solutions to \eqref{eq1} which lie in  $L_0^{1, p}(\R^n)\cap L^q_{\rm loc}(\R^n, \sigma)$. 
We first show that $u=v$ $d \sigma$-a.e. implies that $u=v$ as elements of $L_0^{1, p}(\R^n)$. 

Indeed, suppose that $u=v$ $d \sigma$-a.e.,   
and set $d \mu = u^q d \sigma= v^q d \sigma$, where $\mu \in M^+(\R^n)$, and  
\begin{equation} \label{mu-unique}
-\Delta_p u = - \Delta_p v = \mu.  
\end{equation}
As usual, we assume that $u, v$ are quasicontinuous representatives (see, e.g.,  \cite{HKM}, \cite{MZ}). Then 
by Lemma~\ref{neceFES},  $u, \, v \in L^{1+q}(\R^n, d \sigma)$,  and 
$$
\int_{\R^n} \mathbf{W}_{1, p}\mu \, d \mu  <+\infty. 
$$
By Wolff's inequality \eqref{wolffinequality}, this means that 
$
\mu \in L^{-1, p'}(\R^n)
$.   It is well known  (\cite{MZ}, Sec. 2.1.5)  that, for such $\mu$,  a finite energy solution to the equation $-\Delta_p u = \mu$ is unique. (See also Lemma~\ref{wcp} above.) Hence, 
from \eqref{mu-unique} we deduce 
$u=v$ q.e. and as elements of $L^{1, p}_0(\R^n)$.

We next show that if $u \ge v $ $d \sigma$-a.e. then $u = v$ $d \sigma$-a.e. By Theorem \ref{thmlowest}, it follows that  $u(x) > 0$, 
$v(x) > 0$, for all $x \in \R^n.$
Testing the equations 
\begin{equation} \label{Udist}
\int_{\R^n}|\nabla u |^{p-2} \nabla u \cdot \nabla \phi \, dx = \int_{\R^n}u^q \varphi \, d \sigma, \quad  \phi \in L_0^{1, p}(\R^n)\cap L^q_{\rm loc}(\R^n, d \sigma), 
\end{equation}
\begin{equation} \label{Vdist}
\int_{\R^n}|\nabla v |^{p-2} \nabla v \cdot \nabla \psi \, dx = \int_{\R^n}v^q \psi \, d\sigma, \quad  \psi \in L_0^{1, p}(\R^n)\cap L^q_{\rm loc}(\R^n, d \sigma), 
\end{equation}
with $\phi = u, \psi =v$, respectively, we obtain 
$$\int_{\R^n}|\nabla u |^p \, dx = \int_{\R^n}u^{1+q} \, d \sigma, \quad \int_{\R^n}|\nabla v |^p \, dx = \int_{\R^n}v^{1+q}  \, d \sigma.$$
Let $$\lambda_t(x)=\Bigl((1-t)v^p(x) + tu^p(x)\Bigr)^{\frac{1}{p}}.$$
Using convexity of the Dirichlet integral $\int_{\R^n} |\nabla u|^p \, dx$ in $u^p$ \cite{Kaw} (see also 
 the proof of  Lemma 2.1 in \cite{BF}), we estimate, for all $t \in [0, 1]$, 
$$\int_{\R^n} |\nabla\lambda_t(x)|^p dx \le (1-t) \int_{\R^n} |\nabla v |^p \, d x + t\int_{\R^n} |\nabla u |^p \, dx $$
$$=t\left(  \int_{\R^n}|\nabla u |^p \, dx - \int_{\R^n}|\nabla v |^p \, dx \right) + \int_{\R^n} |\nabla v |^p \, dx.$$
Thus, 
$$\int_{\R^n}\frac{|\nabla\lambda_t(x)|^p - |\nabla \lambda_0(x) |^p}{t} \,dx  \le  \int_{\R^n}u^{1+q} \, d \sigma  - \int_{\R^n}v^{1+q} \,  d \sigma.$$

Using the inequality 
$$|a|^p - |b|^p \ge p|b|^{p-2}b\cdot (a-b), \quad a, b \in \R^n,$$ 
we deduce
$$|\nabla\lambda_t|^p - |\nabla \lambda_0 |^p \ge p |\nabla \lambda_0 |^{p-2}\nabla \lambda_0 \cdot (\nabla \lambda_t-\nabla \lambda_0).$$
Notice that $\lambda_0 =v$, and consequently, for all $t \in (0, 1]$, 
\begin{equation} \label{Vequation}
 p\int_{\R^n}|\nabla v|^{p-2}\nabla v \cdot \frac{\nabla(\lambda_t -  \lambda_0 ) }{t} \,dx  \le  \int_{\R^n}u^{1+q} d\,\sigma  - \int_{\R^n}v^{1+q} d\,\sigma.
\end{equation}
Testing \eqref{Vdist} with $\psi = \lambda_t - \lambda_0 \in L_0^{1,p}(\R^n)$, we obtain
$$\int_{\R^n}|\nabla v|^{p-2}\nabla v \cdot \nabla(\lambda_t -  \lambda_0 ) \,dx  = \int_{\R^n} v^q(\lambda_t -  \lambda_0 )d\sigma.$$
Hence, by \eqref{Vequation}, for all $t \in (0, 1]$, 
\begin{equation} \label{lambdaeq}
 p\int_{\R^n} v^q \, \frac{\lambda_t -  \lambda_0 }{t} \, d\sigma \le \int_{\R^n}u^{1+q} d\,\sigma  - \int_{\R^n}v^{1+q} d\,\sigma. \end{equation}
Clearly, $ \lambda_t \ge \lambda_0$, since $ u \ge v$. Applying Fatou's lemma, we obtain 
$$\int_{\R^n} v^q \, \frac{u^p - v^p}{v^{p-1}} \, d\sigma  \le \liminf_{ t \to 0} p \, \int_{\R^n} v^q \, \frac{\lambda_t -  \lambda_0 }{t} \, d\sigma.$$

Combining this and \eqref{lambdaeq} yields
$$\int_{\R^n}  (\frac{v^qu^p}{v^{p-1}} - v^{1+q} ) \, d\sigma \le \int_{\R^n}u^{1+q} d\,\sigma  - \int_{\R^n}v^{1+q} d\,\sigma.$$
Therefore, canceling the second terms on both sides, and taking into account that $u \ge v$ $d \sigma$-a.e., 
we arrive at 
$$ 0 \ge \int_{\R^n}  (\frac{v^qu^p}{v^{p-1}} - u^{1+q} )\,d\sigma = \int_{\R^n}  \frac{v^qu^p-u^{1+q}v^{p-1}}{v^{p-1}} \,d\sigma$$
$$=\int_{\R^n}  \frac{v^qu^{1+q}(u^{p-1-q}-v^{p-1-q})}{v^{p-1}} \,d\sigma  \ge 0. $$
Hence, $u=v$ $d \sigma$-a.e.

We now complete the proof of the uniqueness property. 
Suppose that $u$ and $v$ are  nontrivial finite energy  solutions to \eqref{eq1}. Then $ \min \, (u, v) \ge w$  $d\sigma$-a.e., where $w$ is the nontrivial minimal solution constructed in Theorem \ref{energysol}. Therefore, as was shown above,  $w=u=v$ $d\sigma$-a.e., and also as 
elements of $L^{1, p}_0(\R^n)$. \end{proof}


\begin{thebibliography}{XXXXX}

\bibitem[ABV10]{AHBV}H. Abdel Hamid and M.-F. Bidaut-V\'{e}ron, \emph{On the connection between two quasilinear elliptic problems with source terms of order 0 or 1,} Commun. Contemp. Math. {\bf 12} (2010), 727--788.

\bibitem[AH96]{AH}
D.~R. Adams and L.~I. Hedberg, \emph{Function Spaces and Potential Theory},
  Grundlehren der math. Wissenschaften \textbf{314}, Springer, 1996.
  
  
\bibitem[BeK02]{BeKa} M.~ Belloni and B.~Kawohl, \emph{A direct uniqueness proof for equations 
involving the $p$-Laplace operator,} Manuscr. Math. {\bf 109} (2002), 229--231.
%
%


\bibitem[BO96]{BO1} L.~ Boccardo and L.~ Orsina, \emph{Sublinear elliptic equations in $L^s$,} Houston Math. J. {\bf 20} (1994), 99--114. 

\bibitem[BO12]{BO2} L.~ Boccardo and L.~ Orsina, \emph{Sublinear elliptic equations with singular potentials,} Adv. Nonlinear Stud. {\bf 12} (2012), 187--198.

\bibitem[BF12]{BF} L. Brasco and G. Franzina, \emph{A note on positive eigenfunctions and hidden convexity,} Arch. Math. 
{\bf 99} (2012), 367--374.

\bibitem[BrB79]{BB} H. Brezis and F. E. Browder, \emph{A property of Sobolev spaces,} Commun. PDE {\bf 44} (1979), 1077--1083.

\bibitem[BrK92]{BK} H. Brezis and S. Kamin,  \emph{Sublinear elliptic equations on $\mathbb{R}^n$,} Manuscr.  Math. {\bf 74} (1992),  87--106. 

\bibitem[BrO86]{BO} H. Brezis and L. Oswald,  \emph{Remarks on sublinear elliptic equations,} Nonlin.  Analysis, Theory, Methods Appl. {\bf 10} (1986),  55--64. 
 
\bibitem[CV13]{CV} D.~T. Cao  and I.~E.  Verbitsky, \emph{Existence and pointwise estimates of solutions
to subcritical quasilinear elliptic equations,} preprint (2013). 



\bibitem[COV00]{COV1}C.~Cascante, J.~M. Ortega, and I.~E. Verbitsky, \emph{Trace inequalities of Sobolev type in the upper triangle case}, 
Proc. London Math. Soc.  \textbf{80} (2000), 391--414. 


\bibitem[COV06]{COV} C.~Cascante, J.~ M. Ortega, and I.~E.  Verbitsky, \emph{On $L^p-L^q$ trace inequalities,} 
J. London Math. Soc. {\bf 74} (2006), 497--511. 

%
%
%
%
%
%
%
  
\bibitem[HKM06]{HKM}
J.~Heinonen, T.~Kilpel\"{a}inen, and O.~Martio, \emph{Nonlinear Potential
  Theory of Degenerate Elliptic Equations}, Dover Publ., 2006 (unabridged republ. of 1993 edition, Oxford University Press).

\bibitem[HW83]{HW}
L.~I. Hedberg and T.~Wolff, \emph{Thin sets in nonlinear potential theory},
  Ann. Inst. Fourier (Grenoble) \textbf{33} (1983), 161--187.
	
\bibitem[JV10]{JV1} B. J.~Jaye and I. E.~Verbitsky, \emph{The fundamental solution of nonlinear operators with natural growth terms,} Ann. Sc. Norm. Super. Pisa Cl. Sci. (5) {\bf 12} (2013), 93--139.

\bibitem[JV12]{JV2} B. J.~Jaye and I. E. ~Verbitsky, \emph{Local and global behaviour of solutions
to nonlinear equations with natural growth terms,} Arch. Rational Mech. Anal. {\bf 204} (2012), 627--681.

\bibitem[Kaw00]{Kaw} B.~Kawohl, \emph{Symmetry results for functions yielding best constants 
in Sobolev-type inequalities,} Discrete Cont. Dynam. Syst. { \bf 6} (2000), 683--690.

%

\bibitem[Kil03]{K2} T.~Kilpel\"{a}inen, \emph{$p$-Laplacian type equations involving measures,} Proc. ICM, Vol. III, 167--176, Beijing, 2002. 
  
\bibitem[KM92]{KM1}
T.~Kilpel\"{a}inen and J.~Mal\'y, \emph{Degenerate elliptic equations with
  measure data and nonlinear potentials}, Ann. Scuola Norm. Super. Pisa, Cl. Sci. 
  \textbf{19} (1992), 591--613. 
  
  \bibitem[KM94]{KM2}
T.~Kilpel\"{a}inen and J.~Mal\'y, \emph{The {W}iener test and potential estimates for
  quasilinear elliptic equations}, Acta Math.  \textbf{172}  (1994), 137--161.


  

\bibitem[KKT09]{KKT} T.~Kilpel\"{a}inen, T.~Kuusi and A.~Tuhola-Kujanp\"{a}\"{a}, \emph{Superharmonic functions are locally renormalized solutions,} Ann. Inst. H. Poincar\'e, Anal. Non Lin\'eaire {\bf 28 } (2011), 775--795.

\bibitem[Kr64]{Kr}
M. A. Krasnoselskii, \emph{Positive Solutions of Operator Equations},  P. Noordhoff, Groningen,
1964.


\bibitem[KuMi13]{KuMi} T.~Kuusi and G.~Mingione, \emph{Guide to nonlinear potential estimates,} Bull. Math. Sci.,  to appear.




\bibitem[Lab02]{Lab}
D.~Labutin, \emph{Potential estimates for a class of fully nonlinear elliptic
  equations}, Duke Math. J. \textbf{111} (2002), 1--49.



\bibitem[MZ97]{MZ}
J.~Mal\'y and W.~Ziemer, \emph{Fine Regularity of Solutions of Elliptic Partial
  Differential Equations}, Math. Surveys Monogr. \textbf{51},
  Amer. Math. Soc., 1997.

  
\bibitem[Maz11]{Maz} V. Maz'ya, \emph{Sobolev Spaces, with Applications to Elliptic Partial Differential Equations,} 2nd  
augmented ed., Grundlehren der mathematischen Wissenschaften \textbf{342}, Springer, Berlin, 2011.

%
%
%
 %

%
%

\bibitem[PV08]{PV1}N.~C.~Phuc and I.~E.~Verbitsky, \emph{Quasilinear and Hessian equations of Lane--Emden type,} 
Ann. Math. {\bf 168} (2008), 859--914. 

\bibitem[PV09]{PV2} N.~C.~Phuc and I.~E.~Verbitsky, \emph{Singular quasilinear and Hessian equations and inequalities,}  
J. Funct. Anal. {\bf 256} (2009), 1875--1905.

\bibitem[PV13]{PV3} N.~C.~Phuc and I.~E.~Verbitsky, \emph{Quasilinear equations with source terms on  Carnot groups,}  
Trans. Amer. Math. Soc. (2013),  http://dx.doi.org/10.1090/ S0002-9947-2013-05920-X 

%
%
%

\bibitem[TW99]{TW}
N. S. Trudinger and X.-J. Wang, \emph{Hessian measures {II}}, Ann. Math.
  \textbf{150} (1999), 579--604.

\bibitem[TW02]{TW1} N. S. Trudinger and X.-J. Wang, \emph{On the weak continuity of elliptic operations and applicaitons to potential theory,} Amer. J. Math. {\bf 124} (2002), 369--410.
%
%
%
%
%
%
\end{thebibliography}
\end{document}